\newtheorem{theorem}{Theorem}[section]
\newtheorem{lemma}[theorem]{Lemma}
\theoremstyle{definition}
\newtheorem{definition}[theorem]{Definition}
\newtheorem{remark}[theorem]{Remark}
\begin{document}

\title[The isoperimetric inequality and surface diffusion flow]{On the isoperimetric inequality and surface diffusion flow for multiply winding curves}
\author[T.~Miura]{Tatsuya Miura}
\address[T.~Miura]{Department of Mathematics, Tokyo Institute of Technology, Meguro, Tokyo 152-8511, Japan}
\email{miura@math.titech.ac.jp}
\author[S.~Okabe]{Shinya Okabe}
\address[S.~Okabe]{Mathematical Institute, Tohoku University, Aoba, Sendai 980-8578, Japan}
\email{shinya.okabe@tohoku.ac.jp}
\keywords{isoperimetric inequality; surface diffusion flow; rotational symmetry}
\subjclass[2010]{53C42; 53C44}

\date{\today}

\begin{abstract}
In this paper we establish a general form of the isoperimetric inequality for immersed closed curves (possibly non-convex) in the plane under rotational symmetry.
As an application we obtain a global existence result for the surface diffusion flow,
providing that an initial curve is $H^2$-close to a multiply covered circle and sufficiently rotationally symmetric.
\end{abstract}

\maketitle

\section{Introduction}\label{section:1}

It is well known that the behavior of the isoperimetric ratio plays an important role in the surface diffusion flow, which is a kind of higher order geometric flow.
In this paper we first establish a general form of the isoperimetric inequality for rotationally symmetric immersed closed curves in the plane, which are possibly non-convex, and then apply it to obtain a global existence result for the surface diffusion flow for curves, which we call the curve diffusion flow (CDF) for short.

\subsection{Isoperimetric inequality}

For a planar closed Lipschitz curve $\gamma$, let $\mathcal{L}(\gamma)$ and $\mathcal{A}(\gamma)$ denote the length and the signed area, respectively,
where we choose the area of a counterclockwise circle to be positive (see Section \ref{section:2} for details).
We define the isoperimetric ratio of $\gamma$ as
\begin{align}\label{eq:1.1}
I(\gamma) :=
\begin{cases}
\dfrac{\mathcal{L}(\gamma)^2}{4 \pi \mathcal{A}(\gamma)} & (\mathcal{A}(\gamma)>0),\\
\infty & (\mathcal{A}(\gamma)\leq0).
\end{cases}
\end{align}
The classical isoperimetric inequality asserts that $\inf I(\gamma) = 1$ in a certain class, and the infimum is attained if and only if $\gamma$ is a round circle, cf.\ \cite{Osserman_1978}.

Our first purpose is to obtain a generalized isoperimetric inequality that extracts the information of rotation number;
namely, we try to find a class $X_n$ of immersed closed curves such that $\inf_{X_n} I(\gamma) = n$, where $n\geq2$, so that the infimum is attained by an $n$-times covered circle.
This is however not easily done by the very simple idea to restrict admissible curves into $n$-times rotating curves.
Indeed, even in such a class the isoperimetric ratio can be arbitrarily close to $1$ due to an example of a large circle with small $(n-1)$-loops;
this example leads us to seek an appropriate ``global'' assumption on the admissible class.

In this paper we focus on rotationally symmetric curves.
For an integer $n\in\mathbb{Z}$ and a positive integer $m\in\mathbb{Z}_{>0}$,
we define the class $A_{n,m}$ to consist of all immersed curves in $W^{2,1}(\mathbb{S}^1;\mathbb{R}^2)$ of rotation number $n$ and of $m$-th rotational symmetry,
where we choose the counterclockwise rotation to be positive (see Definitions \ref{theorem:2.1} and \ref{theorem:2.2} for details).

We are now in a position to state our first main theorem, which gives a fully general version of the isoperimetric inequality for rotationally symmetric curves.

\begin{theorem}\label{theorem:1.1}
Let $n\in\mathbb{Z}$ and $m\in\mathbb{Z}_{>0}$.
Then
\begin{align}\label{eq:1.2}
\inf_{\gamma\in A_{n,m}}I(\gamma) = i_{n,m} := n+m-m\left\lceil\frac{n}{m}\right\rceil.
\end{align}
The index $i_{n,m}$ is nothing but a unique element in $(n+m\mathbb{Z})\cap\{1,\dots,m\}$, and $i_{n,m}=n$ holds if and only if $1 \leq n \leq m$.
The infimum in \eqref{eq:1.2} is attained if and only if $i_{n,m}=n$ and $\gamma$ is a counterclockwise $n$-times covered round circle.
\end{theorem}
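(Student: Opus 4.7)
The plan is to adapt Hurwitz's Fourier-series proof of the classical isoperimetric inequality to the rotationally symmetric setting. Identify $\mathbb{R}^2 \cong \mathbb{C}$ with the origin placed at the center of the $m$-fold symmetry, and parameterize $\gamma \in A_{n,m}$ by arc length as a complex-valued function $z:[0,L] \to \mathbb{C}$. For an appropriate choice of starting point, the symmetry yields an identity $z(s + \delta) = e^{2\pi i/m} z(s)$ for some shift $\delta$. Expanding $z(s) = \sum_k c_k e^{2\pi i k s / L}$, this symmetry together with the rotation number being $n$ forces the Fourier spectrum of $z$ to be contained in $n + m\mathbb{Z}$, and the center of mass $c_0$ vanishes automatically since it must be fixed by the nontrivial rotation.

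The second step is to translate the geometric quantities into Fourier data. Parseval applied to $|z'| \equiv 1$ gives $\sum_k k^2 |c_k|^2 = L^2/(4\pi^2)$, while a termwise evaluation of $\mathcal{A}(\gamma) = \tfrac{1}{2}\,\mathrm{Im}\int_0^L \bar z\, z'\, ds$ gives $\mathcal{A}(\gamma) = \pi \sum_k k|c_k|^2$, so that
\begin{equation*}
I(\gamma) = \frac{\sum_{k \in n+m\mathbb{Z}} k^2 |c_k|^2}{\sum_{k \in n+m\mathbb{Z}} k |c_k|^2}.
\end{equation*}
The crucial arithmetic observation is that $i_{n,m}$ is precisely the smallest positive element of $n + m\mathbb{Z}$: writing each admissible $k$ as $k = i_{n,m} + jm$ and examining the cases $j \geq 0$, $j = -1$, and $j \leq -2$ shows that $k(k - i_{n,m}) \geq 0$ throughout $n + m\mathbb{Z}$, with equality only at $k = i_{n,m}$ (and also at $k = 0$ when $i_{n,m} = m$). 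Multiplying by $|c_k|^2 \geq 0$ and summing yields $\sum k^2 |c_k|^2 \geq i_{n,m} \sum k|c_k|^2$, so that $I(\gamma) \geq i_{n,m}$ whenever $\mathcal{A}(\gamma) > 0$; the case $\mathcal{A}(\gamma) \leq 0$ is handled trivially by the definition of $I$.

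Equality in this step forces $c_k = 0$ for every admissible $k$ with $k(k - i_{n,m}) > 0$, and combined with $c_0 = 0$ this leaves $z(s) = c_{i_{n,m}} e^{2\pi i\, i_{n,m}\, s/L}$, i.e., a counterclockwise $i_{n,m}$-times covered round circle centered at the origin. This curve lies in $A_{n,m}$ exactly when $i_{n,m} = n$, which by definition of $i_{n,m}$ is the range $1 \leq n \leq m$. When $i_{n,m} \neq n$ the infimum is not attained, and to confirm it still equals $i_{n,m}$ I will construct an explicit minimizing sequence: start with an $i_{n,m}$-covered circle of radius $R$ and graft on $(n - i_{n,m})/m$ small symmetrically arranged loops of radius $\varepsilon$ per fundamental sector, oriented to supply the missing rotation, which modifies $\mathcal{L}$ by $O(\varepsilon)$ and $\mathcal{A}$ by $O(\varepsilon^2)$, so $I \to i_{n,m}$ as $\varepsilon \to 0$. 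The main technical hurdle is making the Fourier spectrum step rigorous in the degenerate case $\gcd(n,m) > 1$: then $\gamma$ must be multiply covered and the shift $\delta$ realizing the rotation by $2\pi/m$ is a proper divisor of $L/m$, so the naive spectrum condition refines to a subset of $n + m\mathbb{Z}$; one needs the general fact that a function whose Fourier spectrum lies in an arithmetic progression $a + m\mathbb{Z}$ has rotation number congruent to $a$ modulo $m$ in order to anchor the spectrum back to $n$ itself rather than to some unrelated residue.
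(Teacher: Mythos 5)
Your proposal is essentially correct, but it takes a genuinely different route from the paper. The paper reduces the problem to a free boundary isoperimetric inequality for \emph{open} Lipschitz arcs meeting two half-lines (Theorem \ref{theorem:3.1}), proved by a direct method plus a Lagrange multiplier/regularity analysis (Appendix \ref{section:B}); you instead run a Hurwitz--Wirtinger argument on the position vector in arc length, using the symmetry to confine the Fourier spectrum to $n+m\mathbb{Z}$ and the elementary inequality $k(k-i_{n,m})\geq 0$ on that progression. Your route is shorter and avoids all compactness and regularity machinery; it also yields the equality case for free from the vanishing of the off-extremal coefficients, and---crucially---it applies the Fourier method to $\gamma$ itself rather than to the tangent angle, so it genuinely avoids the convexity hypothesis of Epstein--Gage, Chou, S\"{u}ssmann, and Wang--Li--Chao, which is the paper's main point. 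What the paper's route buys in exchange is the open-curve inequality of Theorem \ref{theorem:3.1} as a statement of independent interest (cf.\ Remark \ref{theorem:3.2}), valid for merely Lipschitz, possibly irregular arcs.

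One part of your write-up should be repaired, though it is not a real gap. The correct functional equation is $z(s+L/m)=e^{2\pi \mathrm{i}\, i_{n,m}/m}\,z(s)$ with shift exactly $L/m$: by Definition \ref{theorem:2.1} and Lemma \ref{theorem:2.3} the symmetry index of any $\gamma\in A_{n,m}$ is forced to be $i_{n,m}$, and since the speed is $1/m$-periodic the arclength function satisfies $\sigma(x+1/m)=\sigma(x)+L/m$. From this the spectrum condition $c_k=0$ unless $k\equiv i_{n,m}\equiv n\pmod m$ follows immediately for every $n,m$, with no degeneracy when $\gcd(n,m)>1$; by contrast, a shift $\delta$ realizing the rotation by angle $2\pi/m$ need not exist at all (e.g.\ $m=4$, $i_{n,m}=2$), so the ``technical hurdle'' you flag at the end, and the proposed fix via rotation numbers of functions with spectrum in an arithmetic progression, are both artifacts of the wrong normalization and should simply be deleted. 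Two smaller points: $c_0$ does not vanish when $m\mid n$ (then $0\in n+m\mathbb{Z}$ and the rotation is trivial), but your case analysis already admits $k=0$ as an equality case there, consistent with the fact that for $n=m$ the extremal circle need not be centered at the origin; and the full hypothesis $\mathcal{N}(\gamma)=n$ (rather than $n$ mod $m$) is used only to rule out attainment when $n\notin[1,m]$, exactly as in the paper.
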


Theorem \ref{theorem:1.1} covers general $n$ and $m$, although in our application we only use the case that $1\leq n\leq m$,
in which the lower bound $i_{n,m}$ is exactly the rotation number $n$.
However, our general statement would be of independent interest, and indeed highlights a difficulty to find appropriate global assumptions;
the infimum is not attained when $n\not\in[1,m]$ because a minimizing sequence may have small loops in symmetric positions and thus the limit curve may have a different rotation number,
which turns out to be $i_{n,m}$.
As an additional remark we mention that if $m=1$, then $i_{n,m}=1$ for any $n\in\mathbb{Z}$;
this means that the classical isoperimetric inequality in the class of immersed closed curves is completely retrieved.

Theorem \ref{theorem:1.1} is previously obtained in the subclass of $A_{n,m}$ that consists of locally convex curves.
To the best of our knowledge, this convex version is first shown by Epstein-Gage \cite[(5.11)]{EG_1987} for highly symmetric curves such that $1 \leq n <m/2$;
their result gives a Bonnensen-style sharper estimate.
The case that $1 \leq n \leq m$ is first proved by Chou \cite[Lemma 3.1]{Chou_2003}, and then by S\"{u}ssmann \cite[Theorem 3]{Suessmann_2011}
and by Wang-Li-Chao \cite[Theorem 4]{WLC_2017} in different ways.
However, all of them heavily rely on convexity in the sense that they use the parametrization by tangent angle.
The main novelty of our result is removing the convexity assumption; this point is crucial in our application.

In the proof of Theorem \ref{theorem:1.1}, we carry out a direct method for vector-valued functions themselves.
However, we then encounter an issue that the isoperimetric ratio only controls up to first order derivatives, while the rotation number is of second order;
this issue is directly related to ``vanishing loops'' phenomena.
Our key idea is to reduce the original problem for closed curves into a free boundary problem for open curves, by using symmetry.
By this procedure the rotation number is translated into the free boundary condition, and thereby converted into the index $i_{n,m}$.
Since the free boundary condition turns out to be of lower order, we are then able to employ a direct method in a class of open Lipschitz curves (Theorem \ref{theorem:3.1}).
Our free boundary problem might first look similar to a relative isometric problem in a sector, but be rather close to a problem of which ambient space is conical; see Remark~\ref{theorem:3.2} for more details.

We conclude this subsection by mentioning Banchoff and Pohl's isoperimetric inequality \cite{BP_1971}, which asserts without any symmetry that
\begin{equation}\label{eq:1.3}
\mathcal{L}(\gamma)^2 \geq 4\pi \int_{\mathbb{R}^2\setminus\gamma(\mathbb{S}^1)}w^2,
\end{equation}
where $w:\mathbb{R}^2\setminus\gamma(\mathbb{S}^1)\to\mathbb{Z}$ denotes the winding number with respect to $\gamma$,
and the equality holds if and only if $\gamma$ is a circle possibly multiply covered.
Their result possesses the strong advantage of being unified, but would not be compatible with our purpose (see Remark \ref{theorem:4.8}).

\subsection{Curve diffusion flow}

We apply Theorem \ref{theorem:1.1} to the Cauchy problem of the curve diffusion flow:
\begin{align}
\label{eq:CDF} \tag{CDF}
\begin{cases}
\partial_t \gamma = - (\partial^2_s \kappa) \nu, \\
\gamma(\cdot,0)=\gamma_0(\cdot),
\end{cases}
\end{align}
where $\gamma:\mathbb{S}^1\times[0,T)\to\mathbb{R}^2$ is a one-parameter family of immersed curves, and $\kappa$, $\nu$, and $s$ denote the signed curvature, the unit normal vector,
and the arc length parameter of each time-slice curve $\gamma(t):=\gamma(\cdot,t)$, respectively.
The curve diffusion flow decreases the length while preserving the (signed) area, and thus the isoperimetric ratio plays an important role.

The surface diffusion flow is first introduced by Mullins (\cite{Mullins_1957}) in 1957 and then studied by many authors.
The local well-posedness of \eqref{eq:CDF} is by now well known even in higher dimensions, thanks to its parabolicity;
see e.g. \cite{EG_1997,GI_1998,EMS_1998,DKS_2002} (and also more recent \cite{Asai_2010,EM_2010,KL_2012}).
On the other hand, the global behavior is more complicated even for curves, due to being of higher order;
the flow may lose some properties as e.g.\ embeddedness \cite{GI_1998}, convexity \cite{GI_1999}, and being a graph \cite{EMP_2001} (see also \cite{Chou_2003,EI_2005,Blatt_2010}).

Our goal is to capture certain initial curves that allow global-in-time solutions to \eqref{eq:CDF}.
It is known that even from a smoothly immersed initial curve the solution may develop a singularity in finite time \cite{Polden_1996, Chou_2003,EI_2005}, and in this case the total squared curvature always blows up \cite{DKS_2002,Chou_2003}.
However, if an initial state is suitably close to a round circle, then the solution exists globally-in-time and converges to a round circle (even in higher dimensions)
\cite{EG_1997,EMS_1998,Chou_2003,EM_2010,Wheeler_2012,Wheeler_2013}; this means that a round circle is a dynamically stable stationary solution.
In this paper we consider solutions that converge to multiply covered circles;
this seems to be a natural direction since all the stationary solutions in \eqref{eq:CDF} of immersed closed curves are circles possibly multiply covered.

Our main result asserts that if an initial curve is $H^2$-close to an $n$-times covered circle and also $m$-th rotationally symmetric for $1\leq n\leq m$,
then there exists a unique global-in-time solution.
Our proof combines Theorem \ref{theorem:1.1} with Wheeler's variational argument for the singly winding case \cite{Wheeler_2013}.
Hence, we use as a key quantity the normalized oscillation of curvature for a closed planar curve $\gamma$:
\begin{align}\label{eq:1.4}
K_{\rm osc}(\gamma) := \mathcal{L}(\gamma) \int_\gamma (\kappa - \bar{\kappa})^2 \, ds,
\quad \bar{\kappa} := \frac{1}{\mathcal{L}(\gamma)}\int_\gamma \kappa ds,
\end{align}
and also introduce an explicit constant related to the smallness condition:
\begin{align}\label{eq:1.5}
K^*_n := \dfrac{2\pi}{3} \left( \sqrt{1+3 n^2 \pi} - \sqrt{3 n^2 \pi} \right)^2
> 0,
\end{align}
which is same as the constant $K^*$ given in \cite[Proposition 3.6]{Wheeler_2013}.
Then we have:

\begin{theorem}\label{theorem:1.2}
Let $\gamma_0$ be a smoothly immersed initial curve.
Suppose that $\gamma_0 \in A_{n,m}$ for some $1\leq n\leq m$, and moreover there is some $K\in(0,K^*_n]$ such that
\begin{equation}
\label{eq:1.6}
K_{\rm osc}(\gamma_0) \leq K, \qquad \frac{I(\gamma_0)}{n} \leq \exp\left(\frac{K}{8 n^2 \pi^2}\right).
\end{equation}
Then \eqref{eq:CDF} admits a unique global-in-time solution $\gamma:\mathbb{S}^1\times[0,\infty)\to\mathbb{R}^2$.
In addition, the solution $\gamma$ satisfies
\begin{equation}\label{eq:1.7}
\sup_{t\in[0,\infty)}K_{\rm osc}(\gamma(t)) \le 2 K,
\end{equation}
retains symmetry in the sense that $\gamma(t)\in A_{n,m}$ for any $t\in[0,\infty)$, and smoothly converges as $t\to\infty$ to an $n$-times covered round circle of the same area as $\gamma_0$.
\end{theorem}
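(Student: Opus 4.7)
The plan is to follow Wheeler's variational strategy from \cite{Wheeler_2013} (which handles the singly-winding case $n=1$) but to replace its key isoperimetric input $I\geq 1$ with the multiply-winding bound $I(\gamma)\geq n$ supplied by Theorem \ref{theorem:1.1}; the constants in \eqref{eq:1.5}--\eqref{eq:1.6} are the natural rescalings of Wheeler's thresholds from $n=1$ to general $n$. First, parabolic theory for \eqref{eq:CDF} (cf.\ \cite{DKS_2002,EMS_1998}) produces a unique smooth maximal solution on $\mathbb{S}^1\times[0,T^\ast)$. Since \eqref{eq:CDF} is equivariant under Euclidean rotations, uniqueness forces $\gamma(t)$ to retain the $m$-th rotational symmetry of $\gamma_0$; and the rotation number, being an integer-valued continuous function of time along smooth immersed families, stays constant. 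Hence $\gamma(t)\in A_{n,m}$ on all of $[0,T^\ast)$, and Theorem \ref{theorem:1.1} yields the crucial time-uniform bound $I(\gamma(t))\geq n$.

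Next I would set up a bootstrap for $K_{\rm osc}$. The standard first variations give $\frac{d}{dt}\mathcal{A}(\gamma(t))=0$ and $\frac{d}{dt}\mathcal{L}(\gamma(t))=-\int_\gamma(\partial_s\kappa)^2\,ds\leq 0$, so $I(\gamma(t))$ is well-defined, positive, and nonincreasing. Following the oscillation computation in \cite[Proposition 3.6]{Wheeler_2013}, one derives a differential inequality of the schematic form
\[
\tfrac{d}{dt}K_{\rm osc}(\gamma(t)) \;\leq\; -\bigl(1-\Psi\bigl(K_{\rm osc},\,\log(I/n)\bigr)\bigr)\,\mathcal{L}(\gamma(t))^3\int_\gamma(\partial_s^2\kappa)^2\,ds,
\]
where $\Psi$ is a polynomial interpolation remainder that is small when $K_{\rm osc}$ is small and when $I$ is close to $n$. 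The constants $K^\ast_n$ and $\exp(K/(8n^2\pi^2))$ are precisely calibrated so that, starting from \eqref{eq:1.6}, the parenthesized factor stays positive throughout the bootstrap region $K_{\rm osc}\leq 2K$. A continuity-in-$t$ argument then closes the bootstrap and yields the global bound \eqref{eq:1.7}.

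Finally, \eqref{eq:1.7} together with $I(\gamma(t))\geq n$ and the conserved positive area yields a two-sided bound on $\mathcal{L}(\gamma(t))$, and hence an $L^2$-bound on $\kappa$, which rules out the concentration-of-curvature blow-up criterion of \cite{DKS_2002}; therefore $T^\ast=\infty$. Integrating the length identity over $[0,\infty)$ forces $\int_\gamma(\partial_s\kappa)^2\,ds\to 0$ along a time-subsequence, and iterated interpolation upgrades this to smooth subsequential convergence. Preservation of area, rotation number, and symmetry together with the vanishing of $\partial_s\kappa$ in the limit identifies the limit as an $n$-times covered round circle of area $\mathcal{A}(\gamma_0)$; uniqueness of the limit and the smoothness of convergence follow from a \L{}ojasiewicz-Simon argument near the multiply covered circle, as in \cite{Wheeler_2013}. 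The \emph{main obstacle} is the middle step: the differential inequality for $K_{\rm osc}$ must close with the sharp constants, which requires both the full strength of Theorem \ref{theorem:1.1} (to guarantee $I(\gamma(t))\geq n$ at every time, not just initially) and a careful $n$-dependent rescaling of Wheeler's interpolation estimates.
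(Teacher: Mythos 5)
Your proposal is correct and follows essentially the same route as the paper: preserve $\gamma(t)\in A_{n,m}$ by uniqueness/equivariance, invoke Theorem \ref{theorem:1.1} and area conservation to bound $\mathcal{L}(\gamma_0)/\mathcal{L}(\gamma(t))\leq\sqrt{I(\gamma_0)/n}$, feed this into Wheeler's oscillation estimate under the bootstrap hypothesis $K_{\rm osc}\leq 2K\leq 2K_n^*$ to close the continuity argument, and conclude global existence from the resulting $L^2$-curvature bound via the criterion of \cite{DKS_2002}. The only cosmetic difference is that the paper uses Wheeler's estimate in its integrated form, $K_{\rm osc}(\gamma(t))\leq K_{\rm osc}(\gamma_0)+8\pi^2n^2\log\bigl(\mathcal{L}(\gamma_0)/\mathcal{L}(\gamma(t))\bigr)$, rather than a differential inequality, and cites \cite{Chou_2003} or \cite{Wheeler_2013} for the final convergence step, as you do.
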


We need the smoothness of $\gamma_0$ just for local well-posedness; for example, it is enough if $\gamma_0\in C^{2,\alpha}(\mathbb{S}^1)$ for our purpose \cite[Theorem 1.1]{EMS_1998}
(see Section \ref{section:4}).

The assumption of rotational symmetry may not be optimal but we certainly need some other assumption than closeness to a circle,
because of the main difference from the singly winding case: multiply covered circles are not dynamically stable.
For example, if we take an initial curve as a doubly covered circle perturbed like a lima\c{c}on, then the small inner loop may vanish in finite time;
see a numerical computation \cite[Figure~1]{EMS_1998} and also Chou's elegant analytic proof \cite[Proposition~B]{Chou_2003}.
The first example of a nontrivial multiply winding global solution is numerically obtained in \cite[Figure~2]{EMS_1998}, under rotational symmetry.
In addition, the fact is that a similar statement to Theorem \ref{theorem:1.1} is previously claimed by Chou \cite[Proposition~C]{Chou_2003} for locally convex initial curves.
However, unfortunately, there seems to be a technical gap: Chou's argument implicitly uses the unverified property that the convexity of an initial curve is retained up to the maximal existence time, since the key isoperimetric inequality (\cite[Lemma 3.1]{Chou_2003}) is only
established for locally convex curves.
This flaw is now easily fixed by using Theorem \ref{theorem:1.1}, which does not require convexity, so in this way we can amend Chou's argument.
Notwithstanding, there are some advantages to follow Wheeler's argument instead of Chou's one: it allows us to deal with non-convex initial data, and also have the explicit smallness conditions (\ref{eq:1.6}) and (\ref{eq:1.7}).
In this sense our result not only corrects but also generalizes \cite[Proposition~C]{Chou_2003}.
In addition, we are also able to have a sharp estimate on the total time that a solution is non-convex (see Remark \ref{theorem:4.7}).

We finally mention that, after Abresch and Langer's celebrated study \cite{AL_1986}, the behavior of multiply winding rotationally symmetric curves is investigated by several authors
for the curve shortening flow \cite{EW_1987,EG_1987,Au_2010,Suessmann_2011,Wang_2011,CW_2014} and other second order geometric flows \cite{WK_2014,WW_2014,CWY_2017,WLC_2017};
in particular, a kind of stability result for multiply covered circles is obtained by Wang \cite{Wang_2011}, in the same spirit of our result.
We remark that all these studies focus on locally convex curves, thus using the property of second order.

This paper is organized as follows.
In Section \ref{section:2} we prepare notation and terminology more rigorously.
Sections \ref{section:3} and \ref{section:4} are devoted to the proofs of Theorems \ref{theorem:1.1} and \ref{theorem:1.2}, respectively.

\subsection*{Acknowledgements}
The first author of this paper was supported by JSPS KAKENHI Grant Number 18H03670, and the second author by JSPS KAKENHI Grant Numbers 19H05599 and 16H03946.
Part of this work was done while the authors were visiting the Institute for Mathematics and its Applications, University of Wollongong.
The authors acknowledge the hospitality and are grateful to Professor Glen Wheeler for his kind invitation and encouragement.

\section{Preliminaries}\label{section:2}

Let $I:=(0,1)$ and $\bar{I}$ be the closure.
Let $R_\theta$ denote the counterclockwise rotation matrix in $\mathbb{R}^2$ through an angle $\theta$; for simplicity, let $R$ denote $R_{\pi/2}$.
For any Lipschitz curve $\gamma\in W^{1,\infty}(I;\mathbb{R}^2)=C^{0,1}(\bar{I};\mathbb{R}^2)$, we define the length $\mathcal{L}$ and the signed area $\mathcal{A}$ (centered at the origin) by
\begin{align*}
\mathcal{L}(\gamma) &:= \int_I|\partial_x\gamma|\, dx, \quad	\mathcal{A}(\gamma) := -\frac{1}{2}\int_I \gamma\cdot R\partial_x\gamma \, dx.
\end{align*}
In what follows we repeatedly use the fact that any Lipschitz curve (not necessarily regular) can be reparameterized by the arclength,
or more generally by a constant speed parameterization, while both $\mathcal{L}$ and $\mathcal{A}$ are not changed (see Section \ref{section:A}).
This fact allows us to use the following expressions in terms of the arclength parameter $s\in[0,\mathcal{L}(\gamma)]$ and the unit normal $\nu:=R\partial_s\gamma$:
\begin{align*}
\mathcal{L}(\gamma) =\int_\gamma ds, \quad \mathcal{A}(\gamma) =-\frac{1}{2}\int_\gamma \gamma\cdot\nu \, ds.
\end{align*}
In addition, if $\gamma\in W^{2,1}(I;\mathbb{R}^2)\subset C^1(\bar{I};\mathbb{R}^2)$ and $\gamma$ is regular (or immersed, i.e., $\min|\partial_x\gamma|>0$),
then we define the rotation number $\mathcal{N}$ by
\begin{align*}
\mathcal{N}(\gamma):=\frac{1}{2\pi}\int_\gamma \kappa \, ds,
\end{align*}
where $\kappa:=\partial_s^2\gamma\cdot\nu$ denotes the signed curvature.
The orientations of $\nu$ and $\kappa$ are taken so that for a counterclockwise circle both $\mathcal{A}$ and $\mathcal{N}$ are positive.

We now rigorously define rotational symmetry, and then introduce the class $A_{n,m}$.

\begin{definition}[Rotational symmetry]\label{theorem:2.1}
For $m\in\mathbb{Z}_{>0}$, we say that a closed curve $\gamma:\mathbb{S}^1\to\mathbb{R}^2$, where $\mathbb{S}^1:=\mathbb{R}/\mathbb{Z}$, is {\em $m$-th rotationally symmetric}
if there exists some $i\in\{1,\dots,m\}$ such that $\gamma(x+1/m)=R_{2\pi i/m}\gamma(x)$ holds for any $x\in\mathbb{S}^1$.
We also use the term {\em $(m,i)$-th rotational symmetry} when we explicitly use the index $i$.
\end{definition}

\begin{definition}[Class $A_{n,m}$]\label{theorem:2.2}
For $n\in\mathbb{Z}$ and $m\in\mathbb{Z}_{>0}$, let $A_{n,m}$ be the set of all regular curves $\gamma\in W^{2,1}(\mathbb{S}^1;\mathbb{R}^2)$ that satisfy $\mathcal{N}(\gamma)=n$
and possess $m$-th rotational symmetry in the sense of Definition~\ref{theorem:2.1}.
\end{definition}

Note that the $W^{2,1}$-regularity is used for defining the rotation number.

We now observe that the index $i_{n,m}\in(n+m\mathbb{Z})\cap\{1,\dots,m\}$ in Theorem~\ref{theorem:1.1} naturally arises from the definition of $A_{n,m}$.

\begin{lemma}\label{theorem:2.3}
	Any curve $\gamma\in A_{n,m}$ is $(m,i_{n,m})$-th rotationally symmetric.
\end{lemma}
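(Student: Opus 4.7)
The plan is to use the symmetry relation to pin down how the tangent angle advances under the shift $x\mapsto x+1/m$, and then close the circle to extract the congruence $i\equiv n\pmod m$.

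First, fix $\gamma\in A_{n,m}$ and let $i\in\{1,\dots,m\}$ be the index witnessing rotational symmetry, so that $\gamma(x+1/m)=R_{2\pi i/m}\gamma(x)$ for all $x\in\mathbb{S}^1$. Since $\gamma\in W^{2,1}\subset C^1$ and $\gamma$ is regular, differentiating this identity in $x$ yields $\partial_x\gamma(x+1/m)=R_{2\pi i/m}\partial_x\gamma(x)$. Normalizing by $|\partial_x\gamma|$ (which is continuous and positive, and itself satisfies $|\partial_x\gamma(x+1/m)|=|\partial_x\gamma(x)|$ because $R_{2\pi i/m}$ is an isometry), the unit tangent satisfies the same rotation identity.

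Next I would introduce a continuous lift $\theta:\mathbb{R}\to\mathbb{R}$ of the tangent angle, i.e., $\partial_x\gamma/|\partial_x\gamma|=(\cos\theta,\sin\theta)$; such a lift exists and is unique up to $2\pi\mathbb{Z}$ by the $C^0$-regularity of the unit tangent and a standard covering-space argument. The tangent identity above implies that for each $x$ there is an integer $k(x)$ with $\theta(x+1/m)=\theta(x)+2\pi i/m+2\pi k(x)$, and by continuity $k(x)$ is a constant integer $k$. Iterating the shift $m$ times gives
\begin{equation*}
\theta(x+1)=\theta(x)+2\pi i+2\pi m k.
\end{equation*}
On the other hand, the definition of the rotation number (via $\int_\gamma\kappa\,ds=\theta(x+1)-\theta(x)$ for any lift) together with $\mathcal{N}(\gamma)=n$ gives $\theta(x+1)-\theta(x)=2\pi n$. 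Comparing yields $n=i+mk$, i.e., $n\equiv i\pmod m$.

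Finally, since $i\in\{1,\dots,m\}$ and $i_{n,m}$ is by definition the unique element of $(n+m\mathbb{Z})\cap\{1,\dots,m\}$, the congruence forces $i=i_{n,m}$, which is the claim. The only mildly delicate step is producing the continuous angular lift and verifying that $k(x)$ is a well-defined integer independent of $x$; this is a standard consequence of the $C^1$-regularity of $\gamma$ combined with regularity, so the argument is essentially just a careful bookkeeping of phases.
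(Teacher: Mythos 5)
Your proposal is correct and is essentially the paper's own argument: the paper computes $\mathcal{N}(\gamma|_{[0,1/m]})=n/m$ by additivity and notes that the endpoint tangent relation $\partial_x\gamma(1/m)=R_{2\pi i/m}\partial_x\gamma(0)$ forces $\mathcal{N}(\gamma|_{[0,1/m]})\in i/m+\mathbb{Z}$, which is exactly your lift computation $\theta(x+1/m)-\theta(x)=2\pi i/m+2\pi k$ iterated $m$ times. Your version merely makes the angular lift and the constancy of $k(x)$ explicit, which the paper leaves implicit.
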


\begin{proof}
	Denote the first one period of $\gamma$ by $\gamma|_m:=\gamma|_{[0,1/m]}$.
	On one hand, the additivity $\mathcal{N}(\gamma)=m\mathcal{N}(\gamma|_m)$ implies that $\mathcal{N}(\gamma|_m)=n/m$.
	On the other hand, if $\gamma$ is $(m,i)$-th rotationally symmetric for some $i\in\{1,\dots,m\}$, then $\partial_x\gamma(L/m)=R_{2\pi i/m}\partial_x\gamma(0)$, and hence $\mathcal{N}(\gamma|_m)\in i/m+\mathbb{Z}$ holds, i.e., $n/m\in i/m+\mathbb{Z}$.
	Therefore, we find that $i\in(n+m\mathbb{Z})\cap\{1,\dots,m\}$, completing the proof.
\end{proof}

\section{The isoperimetric inequality}\label{section:3}

In this section we prove Theorem \ref{theorem:1.1}.
As is mentioned in Section \ref{section:1}, a key step is to prove a type of isoperimetric inequality for open curves.
For $\theta\in[0,2\pi]$, letting $v_\theta:=(\cos\theta,\sin\theta)\in\mathbb{R}^2$, we define the half-line $\Lambda_\theta\subset\mathbb{R}^2$ by $\Lambda_\theta:=\{\lambda v_\theta\mid \lambda\geq0 \}$.
Let
\begin{equation}
\label{eq:3.1}
X_\theta:=\{\gamma\in W^{1,\infty}(I;\mathbb{R}^2)\mid \gamma(0)\in\Lambda_0,\ \gamma(1)\in\Lambda_\theta,\ |\gamma(0)|=|\gamma(1)|,\ \mathcal{L}(\gamma)>0\}.
\end{equation}

\begin{theorem}\label{theorem:3.1}
For any $\theta\in(0,2\pi]$ and $\gamma\in X_\theta$, the following isoperimetric inequality holds:
\begin{align}\label{eq:3.2}
\mathcal{L}(\gamma)^2\geq 2\theta\mathcal{A}(\gamma).
\end{align}
The equality is attained if and only if $\gamma$ is a counterclockwise circular arc of central angle $\theta$;
in particular, if $\theta\in(0,2\pi)$, then the arc needs to be centered at the origin.
\end{theorem}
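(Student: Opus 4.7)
The plan is to prove Theorem~\ref{theorem:3.1} via the direct method of the calculus of variations, taking advantage of the fact that all the constraints defining $X_\theta$ in \eqref{eq:3.1} are zeroth-order in $\gamma$ and therefore pass to uniform limits without difficulty. If $\mathcal{A}(\gamma) \le 0$ the inequality is trivial, so I work with $\mathcal{A}(\gamma) > 0$. Since $\mathcal{L}^2/\mathcal{A}$ is invariant under reparametrization and under homothety of $\mathbb{R}^2$, I normalize by imposing constant-speed parametrization and fixing $\mathcal{L}(\gamma) = 1$.

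\textbf{Step 1 (compactness).} For a minimizing sequence $\{\gamma_k\} \subset X_\theta$ in this normalization, the endpoint constraints force $|\gamma_k(0)| = |\gamma_k(1)|$ on two rays of mutual angle $\theta$, so the chord $|\gamma_k(0)-\gamma_k(1)|$ is a fixed positive multiple of $|\gamma_k(0)|$ (for $\theta<2\pi$) and is bounded by $\mathcal{L}(\gamma_k)=1$. Hence $\gamma_k(0)$ and $\gamma_k(1)$ lie in a bounded set, and $\{\gamma_k\}$ is equibounded and equi-Lipschitz. Arzel\`a--Ascoli gives a subsequence converging uniformly on $\bar I$ to some $\gamma_*$, with $\partial_x \gamma_k \rightharpoonup \partial_x \gamma_*$ weakly-$*$ in $L^\infty$.

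\textbf{Step 2 (passing to the limit).} The zeroth-order conditions defining $X_\theta$ survive uniform convergence. The length is weak-$*$ lower semicontinuous on $W^{1,\infty}$, and the signed area $\mathcal{A}(\gamma)=-\tfrac{1}{2}\int \gamma\cdot R\partial_x\gamma\,dx$ is continuous under the combination of uniform convergence of $\gamma_k$ and weak-$*$ convergence of $\partial_x\gamma_k$. Since the infimum of $\mathcal{L}^2/\mathcal{A}$ is finite and positive (it is bounded above by its value on a centered arc, namely $2\theta$), the limit has $\mathcal{A}(\gamma_*)>0$ and realizes the minimum.

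\textbf{Step 3 (Euler--Lagrange and classification).} On the minimizer I minimize $\mathcal{L}$ under the constraint that $\mathcal{A}$ is fixed. Interior variations force constant curvature $\kappa\equiv\lambda$, so $\gamma_*$ is either a line segment or a circular arc. Endpoint perturbations are $\eta(0)=a v_0$ and $\eta(1)=a v_\theta$, where the common coefficient $a$ is enforced by differentiating $|\gamma(0)|^2=|\gamma(1)|^2$ and using $\gamma_*(0)=rv_0,\ \gamma_*(L)=rv_\theta$; since also $\gamma_*(0)\cdot Rv_0 = \gamma_*(L)\cdot Rv_\theta=0$, the natural boundary condition collapses to
\[
\partial_s\gamma_*(L)\cdot v_\theta = \partial_s\gamma_*(0) \cdot v_0.
\]
A short computation shows that for a circular arc through $(r,0)$ and $r(\cos\theta,\sin\theta)$, whose center must lie on the bisector ray at $(a, a\tan(\theta/2))$, this identity forces $a=0$. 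Hence the critical arc is centered at the origin. The chord alternative has $\mathcal{L}^2/\mathcal{A} = 4\tan(\theta/2) > 2\theta$ when $\theta\in(0,\pi)$, and $\mathcal{A}\le 0$ when $\theta\in[\pi,2\pi)$, so it never minimizes.

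\textbf{Step 4 (evaluation and equality case).} A counterclockwise centered arc of radius $r$ and central angle $\theta$ has $\mathcal{L}=r\theta$ and $\mathcal{A}=r^2\theta/2$, giving $\mathcal{L}^2 = 2\theta\,\mathcal{A}$; this determines both the infimum and the equality characterization. The case $\theta=2\pi$ is handled separately: the two rays coincide and the endpoint conditions reduce to $\gamma(0)=\gamma(1)$, so the problem becomes the classical isoperimetric inequality for closed Lipschitz curves, which yields $\mathcal{L}^2\ge 4\pi\mathcal{A}$ with equality precisely on round circles (now of arbitrary center).

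\textbf{Main obstacle.} The most delicate step is Step~3: correctly identifying the free boundary condition induced by the one-parameter family of admissible endpoint perturbations, and then showing that this condition rigidly selects the \emph{centered} arc among all constant-curvature competitors. A secondary issue is ruling out degeneration of the minimizing sequence (e.g.\ collapse of the arc onto the origin or escape to infinity), which is ensured by the a priori bound $|\gamma(0)|\le 1/(2\sin(\theta/2))$ together with the upper bound on the infimum coming from the explicit centered arc.
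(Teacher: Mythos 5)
Your strategy coincides with the paper's: a direct method for the constrained problem, then a Lagrange-multiplier first variation giving constant curvature in the interior and a weak natural boundary condition $\partial_s\gamma_*(1)\cdot v_\theta=\partial_s\gamma_*(0)\cdot v_0$ at the endpoints, which is upgraded to the right-angle (hence centered) condition via the elementary fact that the center of a circular arc with $|\gamma(0)|=|\gamma(1)|$ lies on the bisector. The differences are cosmetic: you normalize $\mathcal{L}=1$ rather than $\mathcal{A}=1$, rule out the chord by evaluating its ratio rather than via the boundary condition, and quote the classical isoperimetric inequality for $\theta=2\pi$ where the paper rederives it from the same Euler--Lagrange analysis.

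Two steps in your write-up have genuine gaps that the paper closes explicitly. First, Step 3 silently assumes $|\gamma_*(0)|=r>0$: you divide by $r$ to equate the two endpoint coefficients, and, more importantly, the perturbation of $\gamma_*(0)$ along $\Lambda_0$ is only two-sided when $r>0$, so the natural boundary condition is not available at $r=0$. Your ``main obstacle'' paragraph names the collapse onto the origin, but the remedy you cite --- the a priori bound $|\gamma(0)|\le 1/(2\sin(\theta/2))$ --- only prevents escape to infinity, not collapse. The correct fix, the paper's \eqref{eq:3.3}, is a comparison: if $r=0$ the minimizer is a closed constant-curvature curve, hence a $j$-times covered circle with ratio $4\pi j\ge 4\pi>2\theta$, which is beaten by the centered arc of angle $\theta$ that you exhibit in Step 4. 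Second, after the right-angle condition the critical arc need not have central angle $\theta$: a centered counterclockwise arc of angle $2\pi j+\theta$ for any $j\ge 1$ satisfies the same boundary and tangency conditions, and $j\ge1$ must be discarded by the same minimality comparison (ratio $2(2\pi j+\theta)>2\theta$). Both gaps are repaired by one more appeal to the explicit competitor, so the argument is salvageable with the tools you already have; a minor further point is that asserting constant curvature of a Lipschitz minimizer requires the bootstrap regularity step that the paper records in Lemma \ref{theorem:B.2}.
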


\begin{proof}
Without loss of generality, we may only consider curves of positive area since, otherwise, the inequality (\ref{eq:3.2}) obviously holds and the equality is not attained;
in addition, we may assume that $\mathcal{A}(\gamma)=1$ thanks to scale invariance.

Let $X_\theta':=\{\gamma\in X_\theta\mid \mathcal{A}(\gamma)=1\}$.
We first show that there exists $\bar{\gamma}\in X_\theta'$ such that $\mathcal{L}(\bar{\gamma})=\inf_{\gamma\in X_\theta'}\mathcal{L}(\gamma)$ by a direct method,
and then prove that $\bar{\gamma}$ is a circular arc of central angle $\theta$ by using a multiplier method.

{\em Step 1: Existence of a minimizer}.
Let $\ell:=\inf_{\gamma\in X_\theta'}\mathcal{L}(\gamma)\in[0,\infty)$.
Take a minimizing sequence $\{\gamma_n\}\subset X_\theta'$ such that $\lim_{n\to\infty}\mathcal{L}(\gamma_n)=\ell$.
Up to arclength reparameterization and normalization, we may assume that $\gamma_n$ is of constant speed on $I$, i.e., $|\partial_x\gamma_n|\equiv\mathcal{L}(\gamma_n)$;
in particular, $\{\partial_x\gamma_n\}_n$ is bounded in $L^\infty(I;\mathbb{R}^2)$.
In addition, we may also assume the boundedness of $\{\gamma_n\}$ in $L^\infty(I;\mathbb{R}^2)$; indeed, if $\theta\in(0,2\pi)$,
then from the boundary condition and elementary geometry we deduce that there is $C_\theta>0$ (say, $C_\theta=\frac{1}{2\sin(\theta/2)}$) such that for any $\gamma\in X_\theta'$,
\begin{align*}
|\gamma(0)|(=|\gamma(1)|)=C_\theta|\gamma(1)-\gamma(0)|\leq C_\theta\mathcal{L}(\gamma),
\end{align*}
and hence we obtain the desired boundedness of $\{\gamma_n\}$ since
\begin{align*}
\|\gamma_n\|_\infty\leq |\gamma_n(0)|+\mathcal{L}(\gamma_n)\leq (C_\theta+1)\mathcal{L}(\gamma_n);
\end{align*}
if $\theta=2\pi$, up to translation we may assume that the endpoints of $\gamma_n$ are pinned at the origin, and hence we similarly obtain the desired boundedness.
Therefore, there is a subsequence (which we denote by the same notation) that converges weakly in $H^1$ and strongly in $L^\infty$ to some $\bar{\gamma}\in W^{1,\infty}(I;\mathbb{R}^2)$.
Since $\mathcal{A}$ is continuous with respect to these convergences, we have $\mathcal{A}(\bar{\gamma})=\lim_{n\to\infty}\mathcal{A}(\gamma_n)=1$;
this in particular implies that $\mathcal{L}(\bar{\gamma})>0$; hence, noting that $\bar{\gamma}$ still satisfies the boundary condition, we find that $\bar{\gamma}\in X_\theta'$.
By the lower semicontinuity of $\mathcal{L}$ with respect to e.g.\ $L^\infty$-convergence, we have $\mathcal{L}(\bar{\gamma})\leq\liminf_{n\to\infty}\mathcal{L}(\gamma_n)=\ell$
and hence $\mathcal{L}(\bar{\gamma})=\ell$ ($>0$); therefore, $\bar{\gamma}$ is nothing but a minimizer.

{\em Step2: Any minimizer is a circular arc of central angle $\theta$.}
Fix any minimizer $\bar{\gamma}$ of $\mathcal{L}$ in $X_\theta'$.
Using the Lagrange multiplier method and calculating the first variation for interior perturbations, we find that $\bar{\gamma}$ is smooth on $\bar{I}$
and has constant curvature (see Lemma \ref{theorem:B.2}), thus being a circular arc that is possibly multiply covered.

We first complete the proof in the case that $\theta=2\pi$.
In this case the boundary condition that $\bar{\gamma}(0)=\bar{\gamma}(1)$ implies that $\bar{\gamma}$ is a closed circular arc,
i.e., the central angle of $\bar{\gamma}$ is of the form $2\pi j$, where $j$ is a positive integer by area-positivity.
We now only need to ensure that $j=1$;
this easily follows since thanks to the constraint $\mathcal{A}(\bar{\gamma})=1$, a direct computation implies that $\mathcal{L}(\bar{\gamma})=2\sqrt{\pi j}$,
and hence $j$ needs to be $1$ by length-minimality of $\bar{\gamma}$.
Therefore, $\bar{\gamma}$ is nothing but a counterclockwise circle.

Hereafter we assume that $\theta\in(0,2\pi)$.
We first notice that
\begin{align}\label{eq:3.3}
|\bar{\gamma}(0)|=|\bar{\gamma}(1)|>0.
\end{align}
Indeed, otherwise, $\bar{\gamma}$ needs to be closed and hence the length needs to be at least $2\sqrt{\pi}$ by the above argument, but this contradicts the length-minimality of $\bar{\gamma}$
since there is a smaller-length competitor in $X_\theta'$, e.g.\ the circular arc of central angle $\theta$, whose length is $\sqrt{2\theta}\in(0,2\sqrt{\pi})$.
The condition (\ref{eq:3.3}) allows us to perturb $\bar{\gamma}$ at the endpoints in the directions of the half-lines $\Lambda_0$ and $\Lambda_\theta$.
Using the multiplier method again (see Lemma \ref{theorem:B.3}), we find that
\begin{equation}\label{eq:3.4}
	\partial_x\bar{\gamma}(0)\cdot v_0=\partial_x\bar{\gamma}(1)\cdot v_\theta.
\end{equation}
(Geometrically speaking, this means that by gluing the two half-lines, the endpoints of $\bar{\gamma}$ need to be smoothly connected.)
On the other hand, since the arc $\bar{\gamma}$ is circular and $|\gamma(0)|=|\gamma(1)|$, an elementary geometry implies that the center of the arc $\bar{\gamma}$ lies on the bisector $\Lambda_{\theta/2}\cup\Lambda_{\theta/2+\pi}$, and hence
\begin{equation}\label{eq:3.5}
	\partial_x\bar{\gamma}(0)\cdot v_0=-\partial_x\bar{\gamma}(1)\cdot v_\theta.
\end{equation}
Combining \eqref{eq:3.4} and \eqref{eq:3.5}, we find that $\partial_x\bar{\gamma}(0)\cdot v_0=\partial_x\bar{\gamma}(1)\cdot v_\theta=0$, i.e., the circular arc $\bar{\gamma}$ meets perpendicularly $\Lambda_0$ and $\Lambda_\theta$ at the endpoints, respectively.
Therefore, $\bar{\gamma}$ is in particular centered at the origin and the central angle is of the form $2\pi j+\theta$, where $j$ is a nonnegative integer by area-positivity.
It turns out that $j=0$ by a direct computation which is parallel to the case of $\theta=2\pi$.
The proof is now complete.
\end{proof}

Before completing the proof of Theorem \ref{theorem:1.1}, we give some remarks on the above free boundary problem by comparing it with previous studies.

\begin{remark} \label{theorem:3.2}
Many kinds of relative isoperimetric inequalities have been studied for manifolds-with-boundary (see e.g.\ a survey \cite{Ros}),
including singular boundaries of sectorial type \cite{Bandle_1972,Bandle_1980,Choe_1996,BH_1998,Bahn_1999,Bahn_2000,Gimenez_2007} (or more generally of conical type;
see \cite{LP,RR,FI_2013,BF_2017}, and also \cite[Section~5]{Cabre} and references therein).
Theorem \ref{theorem:3.1} looks like a kind of relative isoperimetric problem of sectorial type, but is essentially different in the sense that even a ``non-convex" circular sector ($\theta>\pi$) appears as a minimizer, due to our additional constraint that $|\gamma(0)|=|\gamma(1)|$.

In fact, if we eliminate this constraint, then a similar argument to the proof of Theorem \ref{theorem:3.1} implies that a minimizer for $\theta>\pi$ is always a hemicircle such that
one endpoint lies at the origin, cf. \cite[Lemma~1]{Choe_1996}.
(This kind of phenomenon is observed even in higher dimensions \cite{Kim_2000,Choe_2003,CGR}).
A key point is that the endpoint of a minimizer needs to satisfy the right angle condition unless it is at the origin, since any perturbation along the half-line is allowed.

On the other hand, under the constraint that $|\gamma(0)|=|\gamma(1)|$, the right-angle condition is not as trivial as above since a first variation argument only implies a weak free boundary condition,
which means that a minimizer is also smooth near the endpoints in the space made by gluing the two half-lines.
Combining this condition with global symmetry, which is now an elementary geometry, we retrieve the desired right-angle condition (as in Lemma \ref{theorem:B.3}).
From this point of view, our problem may be rather regarded as a variant of isoperimetric problems of which ambient spaces are cones (see e.g. \cite[Section~2.2]{BZ} or \cite{MR}).

As another difference, the previous studies address only embedded curves that are entirely contained in the sectorial region surrounded by the half-lines $\Lambda_0$ and $\Lambda_\theta$,
but our problem admit any self-intersections and also going out of the sectorial region.
In this sense our direct method for vector-valued functions allows us to generalize the results in \cite{Bandle_1972,Bandle_1980} and \cite[Lemma~1]{Choe_1996}.
\end{remark}

We now complete the proof of Theorem \ref{theorem:1.1} by using Theorem \ref{theorem:3.1}.

\begin{proof}[Proof of Theorem {\rm \ref{theorem:1.1}}]
Throughout the proof, given a curve $\gamma\in A_{n,m}$, we denote the one period of $\gamma$ by $\gamma|_m:=\gamma|_{[0,1/m]}$.

We first prove the following inequality for an arbitrary $\gamma\in A_{n,m}$:
\begin{align}\label{eq:3.6}
I(\gamma) \geq i_{n,m}.
\end{align}
By Lemma \ref{theorem:2.3}, any $\gamma\in A_{n,m}$ is $(m,i_{n,m})$-rotationally symmetric and hence, up to rotation,
we may assume that $\gamma|_m\in X_\theta$ for $\theta:=2\pi i_{n,m}/m\in(0,2\pi]$ without loss of generality.
Therefore, by Theorem \ref{theorem:3.1},
\begin{align}\label{eq:3.7}
2\left(2\pi i_{n,m}\over m\right)\mathcal{A}(\gamma|_m)\leq\mathcal{L}(\gamma|_m)^2.
\end{align}
Using the additivities $\mathcal{L}(\gamma)=m\mathcal{L}(\gamma|_m)$ and $\mathcal{A}(\gamma)=m\mathcal{A}(\gamma|_m)$ due to the $m$-rotational symmetry,
we obtain $4\pi i_{n,m}\mathcal{A}(\gamma)\leq \mathcal{L}(\gamma)^2$, which is equivalent to (\ref{eq:3.6}).

We now prove (\ref{theorem:1.1}).
If $1\leq n \leq m$, then $i_{n,m}=n$, and hence (\ref{theorem:1.1}) follows from the inequality (\ref{eq:3.6})
and the fact that a counterclockwise $n$-times covered circle belongs to $A_{n,m}$ and attains the equality in (\ref{eq:3.6}).
For $n$ and $m$ such that $n\not\in[1,m]$, it suffices to construct a sequence of curves $\{\gamma_j\}_j\subset A_{n,m}$ such that $I(\gamma_j)\to i_{n,m}$.
This is easily done by adding small loops to a fixed counterclockwise $i_{n,m}$-times covered circle, which we denote by $\bar{\gamma}$, so that each $\gamma_j$ has rotation number $n$,
i.e., $\{\gamma_j\}_j\subset A_{n,m}$, but the added loops vanish as $j\to\infty$.
More precisely, we define $\gamma_j$ in such a way that the one period $(\gamma_j)|_m$ is given by a curve $\bar{\gamma}|_m$ with additional counterclockwise $(1-\lceil n/m \rceil)$-loops
of radius $1/j$, where we interpret negative $1-\lceil n/m \rceil$ as adding clockwise $(\lceil n/m \rceil-1)$-loops.
Then the total number of loops added to the entire curve $\bar{\gamma}$ is $m(1-\lceil n/m \rceil)=n-i_{n,m}$.
Therefore, $\mathcal{N}(\gamma_j)=\mathcal{N}(\bar{\gamma})+(n-i_{n,m})=n$ and hence $\gamma_j\in A_{n,m}$.
As $j\to\infty$, since the loops vanish, the isoperimetric ratio of $\gamma_j$ converges to that of the $i_{n,m}$-times covered circle, i.e., $I(\gamma_j)\to i_{n,m}$.

The remaining part is to prove that the infimum in (\ref{eq:1.2}) is attained only if $1\leq n\leq m$ and $\gamma$ is a counterclockwise $n$-times covered circle.
Suppose that a curve $\gamma\in A_{n,m}$ attains the equality in (\ref{eq:3.6}); then, clearly $\gamma|_m\in X_\theta$ attains the equality in (\ref{eq:3.7}),
namely, $\gamma|_m\in X_\theta$ is a counterclockwise circular arc of central angle $2\pi i_{n,m}/m$, and moreover centered at the origin when $i_{n,m}<m$.
This implies that $\gamma$ is a counterclockwise $i_{n,m}$-times covered circle.
Since $\gamma$ is necessarily admissible, i.e., $\mathcal{N}(\gamma)=n$, we now deduce $i_{n,m}=n$, which is equivalent to $1\leq n\leq m$.
The proof is now complete.
\end{proof}

\section{Curve diffusion flow}\label{section:4}

Throughout this section, we use the term ``smooth initial curve'' in the sense that $\gamma_0:\mathbb{S}^1\to\mathbb{R}^2$ is immersed and sufficiently smooth
so that (\ref{eq:CDF}) is locally well-posed in the following sense:
There exists a unique one-parameter family of immersed curves $\gamma:\mathbb{S}^1\times[0,T)$ such that $\gamma$ is smooth in $\mathbb{S}^1\times(0,T)$ and satisfies (\ref{eq:CDF}) in the classical sense,
and moreover $\gamma(t):=\gamma(\cdot,t)$ converges to $\gamma_0$ as $t \downarrow 0$ in $H^2(\mathbb{S}^1)$ so that all the quantities $\mathcal{A}$, $\mathcal{L}$, $\mathcal{N}$,
and $K_{\rm osc}$ are continuous on $[0,T)$.
It is known that this well-posedness is valid at least if $\gamma_0\in C^{2,\alpha}(\mathbb{S}^1)$ \cite[Theorem 1.1]{EMS_1998}.
Since there is a maximal value of $T$ such that the above well-posedness holds, we let $T_M(\gamma_0)\in(0,\infty]$ denote the maximal existence time corresponding to $\gamma_0$.

It is classically known that under the well-posedness, $\mathcal{A}$ and $\mathcal{N}$ are invariant and $\mathcal{L}$ is non-increasing along the flow;
these facts are obtained by differentiating in $t\in(0,T_M(\gamma_0))$ and using continuity at $t=0$.
We only state the facts we will use (see e.g.\ \cite[Lemmas 3.1 and 3.4]{Wheeler_2013} for details).

\begin{lemma}\label{theorem:4.1}
Let $\gamma_0$ be a smooth initial curve, and $\gamma$ be a unique solution to \eqref{eq:CDF}.
Then $\mathcal{A}(\gamma(t))=\mathcal{A}(\gamma_0)$, $\mathcal{L}(\gamma(t))\leq\mathcal{L}(\gamma_0)$, and $\mathcal{N}(\gamma(t))=\mathcal{N}(\gamma_0)$ hold for all $t\in[0,T_M(\gamma_0))$.
\end{lemma}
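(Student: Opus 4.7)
The plan is to establish each of the three assertions on the open interval $t \in (0, T_M(\gamma_0))$, where smoothness of $\gamma$ is guaranteed, by differentiating in time and exploiting the special divergence/geometric structure of the right-hand side $-(\partial_s^2\kappa)\nu$; then the stated equalities and inequality on the closed interval $[0,T_M(\gamma_0))$ follow from the $H^2$-continuity at $t=0$ built into the well-posedness framework (which in particular makes $\mathcal{A}$, $\mathcal{L}$, and $\mathcal{N}$ continuous up to $t=0$).

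For the signed area, I would start from the definition $\mathcal{A}(\gamma)=-\tfrac12\int_I \gamma\cdot R\partial_x\gamma\,dx$, differentiate under the integral in $t$, and integrate by parts in $x$ using $R^T=-R$ and closedness in $\mathbb{S}^1$; the two resulting boundary-free terms coincide, giving the standard first-variation identity $\tfrac{d}{dt}\mathcal{A}(\gamma(t))=-\int_{\gamma(t)} V\,ds$ where $V:=\partial_t\gamma\cdot\nu$ is the normal velocity. Plugging in $V=-\partial_s^2\kappa$ from \eqref{eq:CDF} produces $\tfrac{d}{dt}\mathcal{A}(\gamma(t))=\int_{\gamma(t)} \partial_s^2\kappa\,ds$, which vanishes because $\gamma(t)$ is closed and $\partial_s\kappa$ is periodic. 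Integrating in $t$ and appealing to continuity at $t=0$ yields $\mathcal{A}(\gamma(t))=\mathcal{A}(\gamma_0)$.

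For the length I would perform the analogous computation: differentiating $\mathcal{L}(\gamma)=\int_I|\partial_x\gamma|\,dx$ and integrating by parts gives $\tfrac{d}{dt}\mathcal{L}(\gamma(t))=-\int_{\gamma(t)} \kappa V\,ds$, since $\partial_s^2\gamma=\kappa\nu$. Substituting $V=-\partial_s^2\kappa$ and integrating by parts one more time (again using closedness) yields
\begin{equation*}
\frac{d}{dt}\mathcal{L}(\gamma(t)) = \int_{\gamma(t)} \kappa\,\partial_s^2\kappa\,ds = -\int_{\gamma(t)} (\partial_s\kappa)^2\,ds \leq 0,
\end{equation*}
and monotonicity together with continuity at $t=0$ gives $\mathcal{L}(\gamma(t))\le\mathcal{L}(\gamma_0)$.

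For the rotation number, the cleanest route is topological rather than variational: by the well-posedness hypothesis, $t\mapsto \mathcal{N}(\gamma(t))$ is continuous on $[0,T_M(\gamma_0))$, and by Definition \ref{theorem:2.2} (and the fact that each $\gamma(t)$ is an immersed closed curve in $W^{2,1}$) it takes values only in $\mathbb{Z}$. A continuous integer-valued function on a connected interval is constant, and evaluation at $t=0$ identifies that constant with $\mathcal{N}(\gamma_0)$. I do not expect any genuine obstacle here; the only thing to be mindful of is keeping the sign conventions for $\nu=R\partial_s\gamma$ and $\kappa=\partial_s^2\gamma\cdot\nu$ consistent with those fixed in Section \ref{section:2} throughout the integrations by parts.
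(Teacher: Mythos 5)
Your proposal is correct and follows essentially the route the paper indicates (it gives no detailed proof, only the remark that these facts follow by differentiating in $t$ on $(0,T_M(\gamma_0))$ and using continuity at $t=0$, citing Wheeler): your first-variation computations for $\mathcal{A}$ and $\mathcal{L}$ are consistent with the sign conventions $\nu=R\partial_s\gamma$, $V=\partial_t\gamma\cdot\nu=-\partial_s^2\kappa$ fixed in Section~\ref{section:2}. Your shortcut for $\mathcal{N}$ via integer-valuedness plus the continuity built into the well-posedness framework is a legitimate minor simplification of the same idea.
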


We finally review a simple sufficient condition for the global existence in (\ref{eq:CDF}), which plays a crucial role in our argument:
If the total squared curvature is uniformly bounded, then a solution exists globally-in-time \cite[Theorem 3.1]{DKS_2002} (see also \cite{Chou_2003}), i.e.,
\begin{align}\label{eq:4.1}
\sup_{t\in[0,T_M(\gamma_0))}\int_{\gamma(t)}\kappa^2ds<\infty \quad \Longrightarrow \quad T_M(\gamma_0)=\infty.
\end{align}
This condition can be translated in terms of the oscillation $K_{\rm osc}$ and the length $\mathcal{L}$.

\begin{lemma}\label{theorem:4.2}
Let $\gamma_0$ be a smooth initial curve, and $\gamma$ be a unique solution to \eqref{eq:CDF} with a maximal existence time $T_M(\gamma_0)$.
If
\begin{align}\label{eq:4.2}
\inf_{t\in[0,T_M(\gamma_0))}\mathcal{L}(\gamma(t))>0, \quad \sup_{t\in[0,T_M(\gamma_0))}K_{\rm osc}(\gamma(t))<\infty,
\end{align}
then $T_M(\gamma_0)=\infty$.
\end{lemma}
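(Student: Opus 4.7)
The plan is to reduce the claim to the criterion \eqref{eq:4.1} by showing that the two hypotheses in \eqref{eq:4.2}, combined with the conservation of rotation number from Lemma \ref{theorem:4.1}, yield a uniform bound on $\int_{\gamma(t)} \kappa^2\, ds$.

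First I would decompose the total squared curvature into its mean and oscillatory parts. A direct expansion gives
\begin{equation*}
\int_\gamma \kappa^2\, ds = \int_\gamma (\kappa - \bar{\kappa})^2\, ds + \bar{\kappa}^2\, \mathcal{L}(\gamma),
\end{equation*}
since the cross term $2\bar{\kappa}\int_\gamma(\kappa-\bar{\kappa})\,ds$ vanishes by the very definition of $\bar{\kappa}$. Next I would rewrite each summand on the right-hand side in terms of the quantities appearing in \eqref{eq:4.2}. By the definition \eqref{eq:1.4} of $K_{\rm osc}$, the first term equals $K_{\rm osc}(\gamma)/\mathcal{L}(\gamma)$. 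For the second term, Lemma \ref{theorem:4.1} gives $\mathcal{N}(\gamma(t))=\mathcal{N}(\gamma_0)=:n$ throughout the flow, so $\bar{\kappa}(t) = 2\pi n/\mathcal{L}(\gamma(t))$ and hence $\bar{\kappa}^2\mathcal{L}(\gamma) = 4\pi^2 n^2/\mathcal{L}(\gamma)$. Combining these identifies
\begin{equation*}
\int_{\gamma(t)} \kappa^2\, ds = \frac{K_{\rm osc}(\gamma(t)) + 4\pi^2 n^2}{\mathcal{L}(\gamma(t))}.
\end{equation*}

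Taking the supremum in $t\in[0,T_M(\gamma_0))$ and using the two hypotheses in \eqref{eq:4.2} — the denominator is bounded below by a positive constant and the numerator is bounded above — I obtain a uniform bound on $\int_{\gamma(t)}\kappa^2\,ds$. Then \eqref{eq:4.1} immediately yields $T_M(\gamma_0)=\infty$.

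There is no real obstacle here; the proof is essentially a bookkeeping exercise. The only conceptual point worth flagging is that one must invoke the conservation of rotation number (and not merely the conservation of area) in order to control $\bar{\kappa}$ purely in terms of $\mathcal{L}$. Absent that, the oscillation $K_{\rm osc}$ alone would not control the full $L^2$-norm of $\kappa$, since the mean part $\bar{\kappa}^2\mathcal{L}(\gamma)$ would remain unconstrained.
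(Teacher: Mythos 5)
Your proposal is correct and follows essentially the same route as the paper: both establish the identity $\mathcal{L}(\gamma(t))\int_{\gamma(t)}\kappa^2\,ds = K_{\rm osc}(\gamma(t)) + 4\pi^2\mathcal{N}(\gamma(t))^2$ (via the orthogonal decomposition of $\kappa$ into mean and oscillation, with $\bar{\kappa}=2\pi\mathcal{N}/\mathcal{L}$ and the conservation of $\mathcal{N}$ from Lemma \ref{theorem:4.1}), and then invoke the blow-up criterion \eqref{eq:4.1}. No gaps.
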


\begin{proof}
By the definitions of $K_{\rm osc}$ and $\bar{\kappa}$ and the relation $\bar{\kappa}=2\pi\mathcal{N}/\mathcal{L}$ we have
\begin{align*}
K_{\rm osc}(\gamma(t))
&= \mathcal{L}(\gamma(t)) \int_{\gamma(t)} ( \kappa^2 - 2 \bar{\kappa} \kappa + \bar{\kappa}^2 ) \, ds \\
&= \mathcal{L}(\gamma(t)) \int_{\gamma(t)} \kappa^2 \, ds - \mathcal{L}(\gamma(t))^2 \bar{\kappa}^2
 = \mathcal{L}(\gamma(t)) \int_{\gamma(t)} \kappa^2 \, ds - 4 \pi^2 \mathcal{N}(\gamma(t))^2.
\end{align*}
Combining this identity with (\ref{eq:4.2}) and the fact that $\mathcal{N}(\gamma(t))=\mathcal{N}(\gamma_0)$ in Lemma \ref{theorem:4.1},
we find that $\int_{\gamma(t)}\kappa^2ds$ is uniformly bounded, and hence $T_M(\gamma_0)=\infty$ by (\ref{eq:4.1}).
\end{proof}


\subsection{Global existence}

From now on we are going to prove Theorem \ref{theorem:1.2} by using Lemma \ref{theorem:4.2}.
The fact is that the uniform positivity of length is a generic property; indeed, the classical isoperimetric inequality (Theorem \ref{theorem:1.1} for $m=1$)
and the area-preserving property (Lemma \ref{theorem:4.1}) imply that $\mathcal{L}(\gamma(t))^2 \geq 4\pi\mathcal{A}(\gamma(t)) = 4\pi\mathcal{A}(\gamma_0)$,
and hence $\inf_{t}\mathcal{L}(\gamma(t))>0$ at least if $\mathcal{A}(\gamma_0)>0$ (or more generally if $\mathcal{A}(\gamma_0)\neq0$, thanks to invariance under change of orientation).
Thus we are mainly concerned with the oscillation of curvature.

Our starting point is to use the following upper bound of $K_{\rm osc}$ obtained by Wheeler \cite{Wheeler_2013}.
Recall that $K_{\rm osc}$ and $K_n^*$ are defined in (\ref{eq:1.4}) and (\ref{eq:1.5}).

\begin{lemma}[Control of the oscillation of curvature {\cite[Proposition 3.6]{Wheeler_2013}}] \label{theorem:4.3}
Let $\gamma_0$ be a smooth initial curve, and $\gamma$ be a unique solution to \eqref{eq:CDF}.
If there exists $T^*\in(0,T_M(\gamma_0)]$ such that
\begin{equation}
\label{eq:4.3}
\sup_{t\in[0,T^*)}K_{\rm osc}(\gamma(t)) \le 2 K^*_n,
\end{equation}
then for any $t\in[0,T^*)$,
\begin{align}\label{eq:4.4}
K_{\rm osc}(\gamma(t)) \le K_{\rm osc}(\gamma_0) + 8 \pi^2 n^2 \log{\frac{\mathcal{L}(\gamma_0)}{\mathcal{L}(\gamma(t))}}.
\end{align}
\end{lemma}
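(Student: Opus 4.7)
The plan is to show that the auxiliary functional
\[
F(t) := K_{\rm osc}(\gamma(t)) + 8\pi^2 n^2 \log \mathcal{L}(\gamma(t))
\]
is non-increasing on $[0, T^*)$. Integrating $F(t) \le F(0)$ and rearranging gives precisely (\ref{eq:4.4}), so the lemma follows at once. The task therefore reduces to proving $F'(t) \le 0$ under the standing hypothesis (\ref{eq:4.3}).

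\emph{Step 1 (Evolution equations).} Using $\mathcal{N}\equiv n$ from Lemma~\ref{theorem:4.1}, rewrite $K_{\rm osc}(\gamma(t)) = \mathcal{L}(\gamma(t))\int\kappa^2\,ds - 4\pi^2 n^2$ and differentiate along \eqref{eq:CDF}. The standard pointwise evolution formulas for normal flow $V = -\kappa_{ss}$, namely $\partial_t(ds) = \kappa\kappa_{ss}\,ds$ and $\partial_t\kappa = -\kappa_{ssss} - \kappa^2\kappa_{ss}$, combined with integration by parts on the closed curve, yield
\[
\mathcal{L}'(t) = -\int \kappa_s^2\,ds, \qquad \frac{d}{dt}\int \kappa^2\,ds = -2\int \kappa_{ss}^2\,ds + 3\int \kappa^2\kappa_s^2\,ds.
\]
Combining these with the identity $8\pi^2 n^2\mathcal{L}'/\mathcal{L} = -8\pi^2 n^2\bigl(\int\kappa_s^2\,ds\bigr)/\mathcal{L}$ produces an explicit expression for $F'(t)$ as a sum of a manifestly non-positive ``good'' term proportional to $-\int \kappa_{ss}^2\,ds$ and ``bad'' higher-order curvature terms that must be controlled.

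\emph{Step 2 (Sharp interpolation).} The heart of the argument is to dominate the bad cubic term $\int \kappa^2\kappa_s^2\,ds$ by the good term $\int \kappa_{ss}^2\,ds$ with a prefactor that shrinks with $K_{\rm osc}$. Decomposing $\kappa = (\kappa - \bar\kappa) + \bar\kappa$ with $\bar\kappa = 2\pi n/\mathcal{L}$ separates the mean contribution from that of the oscillation. For the latter, the periodic Poincar\'e--Wirtinger inequality on the closed curve,
\[
\int(\kappa - \bar\kappa)^2\,ds \le \Big(\frac{\mathcal{L}}{2\pi}\Big)^2\int \kappa_s^2\,ds,
\]
together with its analogue between $\int \kappa_s^2\,ds$ and $\int \kappa_{ss}^2\,ds$ (using that $\kappa_s$ has zero mean on a closed curve), and a Cauchy--Schwarz/Young split of the mixed terms, yields a Gagliardo--Nirenberg-type estimate whose leading coefficient depends monotonically on $K_{\rm osc}$ and on $n$.

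\emph{Step 3 (Closure).} The explicit constant $K^*_n$ in (\ref{eq:1.5}) is calibrated so that the resulting quadratic inequality in $K_{\rm osc}$ closes: under the hypothesis $K_{\rm osc}(\gamma(t)) \le 2K^*_n$ on $[0, T^*)$, the bad terms are absorbed by the good one, giving $F'(t) \le 0$ throughout $[0, T^*)$, and integrating in time recovers (\ref{eq:4.4}). The main technical obstacle is the bookkeeping in Step 2: one must track how every constant depends on the current length $\mathcal{L}(t)$ and on $n$ so that $K^*_n$ emerges as the sharp root of the resulting quadratic, rather than as a non-explicit smallness threshold. Once the algebra is arranged correctly, the explicit formula (\ref{eq:1.5}) drops out as exactly that critical root; this calibration is the computation carried out in \cite[Proposition~3.6]{Wheeler_2013}, on which the present proof relies.
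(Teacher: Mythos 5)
Your outline is consistent with how \cite[Proposition 3.6]{Wheeler_2013} is actually proved, but note that the paper itself offers no proof of this lemma at all: it is a verbatim citation of Wheeler's result, and the only derivation supplied is the remark immediately following, which starts from Wheeler's original estimate
\begin{align*}
K_{\rm osc}(\gamma(t)) + 8 \pi^2 n^2 \log{\mathcal{L}(\gamma(t))} + \int^t_0 K_{\rm osc}(\gamma(\tau)) \dfrac{\| \partial_s \kappa \|^2_{L^2}}{\mathcal{L}(\gamma(\tau))} \, d\tau
\le K_{\rm osc}(\gamma_0) + 8 \pi^2 n^2 \log{\mathcal{L}(\gamma_0)}
\end{align*}
and obtains \eqref{eq:4.4} by discarding the nonnegative dissipation integral and moving the $\log\mathcal{L}$ term to the right. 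Your formulation $F(t)\le F(0)$ with $F=K_{\rm osc}+8\pi^2n^2\log\mathcal{L}$ is exactly this rearrangement, and your Step 1 evolution identities ($\mathcal{L}'=-\int\kappa_s^2\,ds$ and $\frac{d}{dt}\int\kappa^2\,ds=-2\int\kappa_{ss}^2\,ds+3\int\kappa^2\kappa_s^2\,ds$) are correct for \eqref{eq:CDF}. The difference is one of presentation: the paper treats Wheeler's proposition as a black box, whereas you attempt to reconstruct its interior. That reconstruction is sound in structure (monotonicity of $F$ modulo a good term $-2\mathcal{L}\int\kappa_{ss}^2\,ds$ absorbing the cubic term $3\mathcal{L}\int\kappa^2\kappa_s^2\,ds$ under the smallness hypothesis \eqref{eq:4.3}), but the decisive content --- the sharp interpolation inequality of Step 2 and the calibration of $K^*_n$ in Step 3 --- is only described, not executed, and you explicitly delegate it back to \cite{Wheeler_2013}. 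Since the paper does the same, nothing is lost, but be aware that as written your argument is an annotated citation rather than an independent proof; a genuinely self-contained version would require carrying out Wheeler's Lemma 3.5--type interpolation with explicit constants, which is where all the work lies.
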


\begin{remark}
The original estimate in \cite[Proposition 3.6]{Wheeler_2013} is the following form:
\begin{align*}
K_{\rm osc}(\gamma(t)) + 8 \pi^2 n^2 \log{\mathcal{L}(\gamma(t))} + \int^t_0 K_{\rm osc}(\gamma(\tau)) \dfrac{\| \partial_s \kappa \|^2_{L^2}}{\mathcal{L}(\gamma(\tau))} \, d\tau\\
\qquad  \le K_{\rm osc}(\gamma_0) + 8 \pi^2 n^2 \log{\mathcal{L}(\gamma_0)}.
\end{align*}
However, we easily obtain (\ref{eq:4.4}) from this estimate by deleting the non-negative third term of the l.h.s., and moving the second term of the l.h.s.\ to the r.h.s.
\end{remark}

From Lemma \ref{theorem:4.3} we deduce that once the ratio $\mathcal{L}(\gamma_0)/\mathcal{L}(\gamma(t))$ is well controlled to be uniformly close to $1$, then assuming $K_{\rm osc}(\gamma_0)\ll1$,
we get a uniform control of $K_{\rm osc}$.
(Recall that $\mathcal{L}(\gamma_0)/\mathcal{L}(\gamma(t)) \geq 1$ by Lemma \ref{theorem:4.1}.)
In \cite[Proposition 3.7]{Wheeler_2013} Wheeler indeed controls $K_{\rm osc}$ in such a way, focusing on the case that $n=1$, and hence $\gamma_0$ is taken to be close to a round circle;
his key idea is to use the isoperimetric inequality.

Now, as a main application of Theorem \ref{theorem:1.1}, we state the following key lemma which gives a uniform control of $\mathcal{L}(\gamma_0)/\mathcal{L}(\gamma(t))$ under symmetry.

\begin{lemma}[Lower bound of length]\label{theorem:4.5}
Let $\gamma_0$ be a smooth initial curve, and $\gamma$ be a unique solution to \eqref{eq:CDF}.
If $\gamma_0 \in A_{n,m}$, then $\gamma(t)\in A_{n,m}$ for any $t\in[0,T_M(\gamma_0))$.
Moreover, if we additionally assume that $1 \leq n \leq m$ and $\mathcal{A}(\gamma_0)>0$, then for any $t\in[0,T_M(\gamma_0))$,
\begin{equation}\label{eq:4.5}
\frac{\mathcal{L}(\gamma_0)}{\mathcal{L}(\gamma(t))} \leq \sqrt{\frac{I(\gamma_0)}{n}}.
\end{equation}
\end{lemma}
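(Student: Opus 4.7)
The plan is to split the proof into two parts, the first being a symmetry-preservation statement that relies on uniqueness of the flow, and the second an essentially immediate application of Theorem \ref{theorem:1.1} combined with the area-preserving property (Lemma \ref{theorem:4.1}).

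For the first claim, I would exploit the rotational equivariance of \eqref{eq:CDF}. Since $\gamma_0 \in A_{n,m}$, by Lemma \ref{theorem:2.3} there exists $i := i_{n,m}$ such that $\gamma_0(x+1/m) = R_{2\pi i/m}\gamma_0(x)$ for every $x\in\mathbb{S}^1$. Define
\begin{equation*}
\tilde{\gamma}(x,t) := R_{2\pi i/m}^{-1}\gamma(x+1/m,\,t).
\end{equation*}
Because the curve diffusion equation is invariant under rigid motions and translations of the parameter, $\tilde\gamma$ is again a solution to \eqref{eq:CDF}, and by the symmetry of the initial data one has $\tilde\gamma(\cdot,0)=\gamma_0$. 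Uniqueness (built into our notion of smooth initial curve) forces $\tilde\gamma=\gamma$ on $\mathbb{S}^1\times[0,T_M(\gamma_0))$, i.e.\ $\gamma(t)$ is $(m,i)$-th rotationally symmetric for every $t$. Combined with $\mathcal{N}(\gamma(t))=n$ (Lemma \ref{theorem:4.1}) and the smoothness of $\gamma(t)$ for $t>0$ (together with $\gamma(0)=\gamma_0\in W^{2,1}$), this yields $\gamma(t)\in A_{n,m}$.

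For the second claim, under $1\leq n\leq m$ Theorem \ref{theorem:1.1} gives $i_{n,m}=n$, hence
\begin{equation*}
I(\gamma(t)) \geq n \qquad \text{for every } t\in[0,T_M(\gamma_0)).
\end{equation*}
Lemma \ref{theorem:4.1} ensures $\mathcal{A}(\gamma(t))=\mathcal{A}(\gamma_0)>0$, so the definition \eqref{eq:1.1} of $I$ gives
\begin{equation*}
\mathcal{L}(\gamma(t))^2 \geq 4\pi n\,\mathcal{A}(\gamma_0) = \frac{n}{I(\gamma_0)}\,\mathcal{L}(\gamma_0)^2.
\end{equation*}
Rearranging and taking the square root produces \eqref{eq:4.5}.

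The only non-routine step is the symmetry-preservation argument, and even that is standard once one recognizes that $\gamma_0\in A_{n,m}$ encodes an exact discrete equivariance that is passed through the uniqueness clause of the local well-posedness theory. Everything else is a direct consequence of results already in hand (Theorem \ref{theorem:1.1}, Lemma \ref{theorem:4.1}, and the definition of $I$).
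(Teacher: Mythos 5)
Your proposal is correct and follows essentially the same route as the paper: the symmetry preservation via rotational equivariance plus uniqueness is exactly the content of the paper's Lemma \ref{theorem:C.3} (invoked in the paper's proof of this lemma), and the length bound is the same direct combination of Theorem \ref{theorem:1.1} with the area-preservation in Lemma \ref{theorem:4.1} and the definition of $I$.
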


\begin{proof}
The assertion that $\gamma(t)\in A_{n,m}$ follows from the well-posedness of (\ref{eq:CDF});
indeed, the property that $\mathcal{N}(\gamma(t))=\mathcal{N}(\gamma_0)=n$ follows from Lemma~\ref{theorem:4.1},
while the $m$-th rotational symmetry is also preserved along the flow thanks to uniqueness and geometric invariance of the flow (see Lemma~\ref{theorem:C.3} for a complete proof).

If we additionally assume that $1 \leq n \leq m$ and $\mathcal{A}(\gamma_0)>0$,
then from Theorem~\ref{theorem:1.1} and the fact that $\gamma(t)\in A_{n,m}$ we deduce that $\mathcal{L}(\gamma(t)) \geq \sqrt{4 \pi n \mathcal{A}(\gamma(t))}$.
Using the area-preserving property in Lemma \ref{theorem:4.1}, and recalling (\ref{eq:1.1}), we obtain
\begin{align*}
\mathcal{L}(\gamma(t)) \geq \sqrt{4 \pi n \mathcal{A}(\gamma(t))} = \sqrt{4 \pi n \mathcal{A}(\gamma_0)} = \mathcal{L}(\gamma_0) \sqrt{\frac{n}{I(\gamma_0)}},
\end{align*}
which completes the proof.
\end{proof}

We are now in a position to assert the main global existence result.

\begin{theorem}[Global existence]\label{theorem:4.6}
Let $\gamma_0$ be a smooth initial curve, and $\gamma$ be a unique solution to \eqref{eq:CDF}.
Suppose that $\gamma_0 \in A_{n,m}$ for some $1\leq n\leq m$, and moreover there is some $K\in(0,K^*_n]$ such that \eqref{eq:1.6} holds.
Then
\begin{equation}\label{eq:4.6}
\sup_{t\in[0,T_M(\gamma_0))}K_{\rm osc}(\gamma(t)) \leq 2K.
\end{equation}
In particular, $T_M(\gamma_0)=\infty$.
\end{theorem}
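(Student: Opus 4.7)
The plan is to run a standard continuity (bootstrap) argument that combines Wheeler's oscillation estimate (Lemma \ref{theorem:4.3}) with the length lower bound from Lemma \ref{theorem:4.5}. The point is that the explicit constants in the smallness conditions \eqref{eq:1.5}--\eqref{eq:1.6} are calibrated so that the bootstrap closes strictly, with no slack to spare.

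Set up the bootstrap by defining
$T^* := \sup\{\, t \in [0,T_M(\gamma_0)) \mid K_{\rm osc}(\gamma(\tau)) \leq 2K \text{ for all } \tau \in [0,t]\,\}.$
Since $K_{\rm osc}(\gamma_0) \leq K < 2K$ and $t \mapsto K_{\rm osc}(\gamma(t))$ is continuous on $[0,T_M(\gamma_0))$ by the assumed well-posedness, we have $T^* > 0$. I would then aim to prove the strictly stronger bound $K_{\rm osc}(\gamma(t)) \leq \tfrac{3}{2}K$ on $[0,T^*)$; once that is established, continuity immediately forces $T^* = T_M(\gamma_0)$, which is exactly \eqref{eq:4.6}.

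To prove the strict bound, I would invoke the two preceding lemmas on $[0,T^*)$. The bootstrap hypothesis $K_{\rm osc} \leq 2K \leq 2K_n^*$ (valid because $K \leq K_n^*$) makes Lemma \ref{theorem:4.3} applicable and gives $K_{\rm osc}(\gamma(t)) \leq K + 8\pi^2 n^2 \log(\mathcal{L}(\gamma_0)/\mathcal{L}(\gamma(t)))$. Next, \eqref{eq:1.6} forces $I(\gamma_0) < \infty$, hence $\mathcal{A}(\gamma_0) > 0$, so Lemma \ref{theorem:4.5} applies (this is where Theorem \ref{theorem:1.1} enters, via the $m$-th symmetry of $\gamma(t)$) and yields $\mathcal{L}(\gamma_0)/\mathcal{L}(\gamma(t)) \leq \sqrt{I(\gamma_0)/n}$. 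Taking logarithms and using the second inequality in \eqref{eq:1.6} gives $\log(\mathcal{L}(\gamma_0)/\mathcal{L}(\gamma(t))) \leq K/(16 n^2 \pi^2)$, and substituting back produces $K_{\rm osc}(\gamma(t)) \leq K + K/2 = \tfrac{3}{2}K < 2K$, as desired.

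Global existence is then a direct application of Lemma \ref{theorem:4.2}. The uniform oscillation bound \eqref{eq:4.6} is one hypothesis of that lemma; for the other, Lemma \ref{theorem:4.5} (or equivalently Theorem \ref{theorem:1.1} plus the area-preserving property of Lemma \ref{theorem:4.1}) yields $\mathcal{L}(\gamma(t))^2 \geq 4\pi n\, \mathcal{A}(\gamma_0) > 0$ uniformly in $t$, which is the required positive lower bound on length. I do not foresee any genuine analytic obstacle: the substantive work has already been carried out in Theorem \ref{theorem:1.1} (removing convexity from the rotational-symmetric isoperimetric inequality) and in Wheeler's oscillation estimate; the remaining proof is essentially an algebraic verification that the constants line up so that the bootstrap closes with a strict margin.
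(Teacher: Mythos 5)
Your proposal is correct and follows essentially the same route as the paper: the same continuity/bootstrap argument with the threshold $2K$, the same combination of Lemma \ref{theorem:4.3} and Lemma \ref{theorem:4.5} yielding the strict bound $K_{\rm osc}\leq\tfrac{3}{2}K$, and the same appeal to Lemma \ref{theorem:4.2} for global existence. The constants check out exactly as in the paper's computation.
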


\begin{proof}
We first note that $\mathcal{A}(\gamma_0)>0$ under (\ref{eq:1.6}) since $I(\gamma_0)<\infty$, cf.\ (\ref{eq:1.1}).
Since $K_{\rm osc}(\gamma(t))$ is continuous and $K_{\rm osc}(\gamma(0)) \leq K$ by (\ref{eq:1.6}), there exists $T_K\in(0,T_M(\gamma_0)]$ such that
$$
T_K := \sup\{ \tau \in [0, T_M(\gamma_0)) \mid K_{\rm osc}(\gamma(t)) \le 2 K \quad \text{for} \quad t \in [0, \tau)\}.
$$
We prove $T_K=T_M(\gamma_0)$ by contradiction.
Assume that $T_K < T_M(\gamma_0)$.
Using Lemmas \ref{theorem:4.3} and \ref{theorem:4.5}, we have for $t\in[0, T_K)$,
\begin{align*}
K_{\rm osc}(\gamma(t))
& \stackrel{\eqref{eq:4.4}}{\le} K_{\rm osc}(\gamma_0) + 8 \pi^2 n^2 \log{\dfrac{\mathcal{L}(\gamma_0)}{\mathcal{L}(\gamma(t))}}\\
& \stackrel{\eqref{eq:4.5}}{\le} K_{\rm osc}(\gamma_0) + 4 \pi^2 n^2 \log{\frac{I(\gamma_0)}{n}}\\
& \stackrel{\eqref{eq:1.6}}{\le} K + 4 \pi^2 n^2 \cdot \dfrac{K}{8 \pi^2 n^2} = \dfrac{3}{2} K.
\end{align*}
This means that $K_{\rm osc}$ remains less than $2K$ in $[0,T_K+\varepsilon)$ for some small $\varepsilon>0$, but this contradicts to the maximality of $T_K$.
Therefore, we have $T_K=T_M(\gamma_0)$.

The assertion that $T_M(\gamma_0)=\infty$ now immediately follows from Lemma \ref{theorem:4.2},
thanks to the bounds of $K_{\rm osc}$, cf.\ \eqref{eq:4.6}, and $\mathcal{L}$, e.g., \eqref{eq:4.5}.
\end{proof}

We finally complete the proof of Theorem \ref{theorem:1.2}.

\begin{proof}[Proof of Theorem \ref{theorem:1.2}]
All the assertions except for the asymptotic behavior as $t\to\infty$ are already verified in Theorem \ref{theorem:4.6} and Lemma \ref{theorem:4.5}.
Since the remaining convergence directly follows from \cite[Proposition A]{Chou_2003} (or \cite[Section 4]{Wheeler_2013}), we do not repeat here.
The proof is now complete.
\end{proof}

\begin{remark}[Waiting time]\label{theorem:4.7}
Using Theorem \ref{theorem:1.1} (or Lemma \ref{theorem:4.5}), we are also able to argue about the ``waiting time'' \`{a} la Wheeler \cite[Proposition 1.5]{Wheeler_2013}.
For a given smooth initial curve $\gamma_0$, we let $T_W(\gamma_0)$ be the one-dimensional Lebesgue measure of the subset of the time interval $[0,T_M(\gamma_0))$
in which the solution to \eqref{eq:CDF} starting from $\gamma_0$ is not strictly convex.
His argument in \cite[p.945]{Wheeler_2013} gives the following general upper bound of $T_W(\gamma_0)$:
$$
T_W(\gamma_0) \leq \frac{\mathcal{L}(\gamma_0)^4-(4 \pi \mathcal{A}(\gamma_0))^2}{16 \pi^2 \mathcal{N}(\gamma_0)^2}.
$$
This is sharp in the sense that the right-hand side is zero if and only if $\gamma_0$ is circle, but hence not sharp when we consider solutions near multiply covered circles,
for which $\mathcal{L}(\gamma_0)^4$ is close to $\mathcal{N}(\gamma_0)^2(4 \pi \mathcal{A}(\gamma_0))^2$.
However, if $\gamma_0\in A_{n,m}$, then just replacing the classical isoperimetric inequality in his argument with our one, we reach the desired estimate:
$$
T_W(\gamma_0) \leq \frac{\mathcal{L}(\gamma_0)^4-(4 \pi n \mathcal{A}(\gamma_0))^2}{16 \pi^2 n^2}.
$$
\end{remark}

\begin{remark}[Banchoff-Pohl's isoperimetric inequality]\label{theorem:4.8}
We finally indicate that the area functional in (\ref{eq:1.3}),
$$\widehat{\mathcal{A}}(\gamma):=\int_{\mathbb{R}^2\setminus\gamma(\mathbb{S}^1)}w^2,$$
is not generally preserved by \eqref{eq:CDF}.
A simple example is the shrinking figure-eight \cite{Polden_1996,EMS_1998}, for which $\widehat{\mathcal{A}}$ strictly decreases.
More non-trivially, this phenomenon occurs even for locally convex initial curves.
Consider an initial curve $\gamma_0$ that is smoothly close to a multiply covered circle but blows up in finite time; such a curve exists in view of \cite[Proposition B]{Chou_2003}.
If the area $\widehat{\mathcal{A}}$ were preserved during the flow from $\gamma_0$,
then the same argument as deriving \eqref{eq:4.5} would imply that $\mathcal{L}(\gamma_0)/\mathcal{L}(\gamma(t))\leq(\widehat{I} (\gamma_0))^{1/2}$,
where $\widehat{I}:=\mathcal{L}/4\pi\widehat{\mathcal{A}}$, and since $\log\widehat{I}(\gamma_0) \ll 1$,
as in the proof of Theorem \ref{theorem:4.6} we would obtain a global solution $\gamma(t)$; this is a contradiction.
\end{remark}

\begin{appendix}

\section{Change of variables}\label{section:A}

In this section we verify that we can always use change of variables for Lipschitz curves, without any condition concerning the speed of a curve.
This point is important in our direct method since for a minimizing sequence of the isoperimetric ratio, the speed of a curve may degenerate in the limit.

Fix any $\gamma\in W^{1,\infty}(I;\mathbb{R}^2)$.
Let $J:=(0,\mathcal{L}(\gamma))$.
Then the function $\sigma:\bar{I}\to\bar{J}$ defined by
\begin{align*}
\sigma(x):=\int_0^x|\partial_x\gamma|
\end{align*}
is monotone, Lipschitz continuous, and satisfies $\partial_x\sigma=|\partial_x\gamma|$ a.e.\ in $I$.
In addition, there is a unique $1$-Lipschitz curve $\tilde{\gamma}\in W^{1,\infty}(J;\mathbb{R}^2)$ (corresponding to the arclength parameterization of $\gamma$) such that
\begin{equation}\label{eq:A.1}
\gamma(x)=\widetilde{\gamma}\circ\sigma(x) \qquad \text{for a.e.}\ x\in I,
\end{equation}
and moreover $|\partial_s\widetilde{\gamma}(s)|=1$ for a.e.\ $s\in J$ (cf.\ \cite[Theorem 3.2]{Hajlasz_2003}).
This in particular implies that $\mathcal{L}(\gamma)=\mathcal{L}(\widetilde{\gamma})$, i.e., the arclength reparameterization does not change the length.
We now verify the invariance of the area functional.
For the reparameterized curve we have the following chain rule \cite[Corollary 3.66]{Leoni_2017}:
\begin{equation}
\partial_x\gamma(x)=\partial_s\widetilde{\gamma}(\sigma(x))\partial_x\sigma(x) \qquad \text{for a.e.}\ x\in I,
\end{equation}
where the right-hand side is interpreted as zero whenever $\partial_x\sigma(x)=0$.
This together with (\ref{eq:A.1}) implies that
\begin{equation}\label{eq:A.3}
\int_I\gamma\cdot R\partial_x\gamma dx = \int_I [\widetilde{\gamma}(\sigma(x))\cdot R\partial_s\widetilde{\gamma}(\sigma(x))]\partial_x\sigma(x)dx.
\end{equation}
We also have the change of variables formula for the area functional \cite[Corollary~3.78]{Leoni_2017}:
\begin{equation*}
\int_I [\widetilde{\gamma}(\sigma(x))\cdot R\partial_s\widetilde{\gamma}(\sigma(x))]\partial_x\sigma(x)dx = \int_J \widetilde{\gamma}(s)\cdot R\partial_s\widetilde{\gamma}(s)ds,
\end{equation*}
which implies with (\ref{eq:A.3}) that $\mathcal{A}(\gamma)=\mathcal{A}(\widetilde{\gamma})$, i.e., the area functional is also invariant with respect to reparameterization.

\section{First variation and multiplier method}\label{section:B}

In this section we rigorously carry out first variation arguments for vector-valued Lipschitz functions.
We use a special case of the Lagrange multiplier method \cite[Proposition 1 in Section 4.14]{Zeidler_1995}:

\begin{theorem}\label{theorem:B.1}
Let $X$ be a real Banach space, $U\subset X$ be an open set, $f,g\in C^1(U;\mathbb{R})$, and $A_g:=\{u\in X \mid g(u)=0 \}$.
If $u\in U$ is a minimizer (or maximizer) of $f$ in $A_g$, then either $Dg(u)=0$, or there exists $\lambda\in\mathbb{R}$ such that $Df(u)+\lambda Dg(u)=0$.
\end{theorem}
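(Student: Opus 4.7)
The plan is to reduce the claim to a standard application of the scalar implicit function theorem, exploiting the fact that $g$ is real-valued so the constraint surface has codimension one. First I would dispose of the trivial alternative: if $Dg(u)=0$ the conclusion holds vacuously, so henceforth assume $Dg(u)\neq 0$ and fix $v\in X$ with $Dg(u)[v]=1$.

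The crux is to show $Df(u)[h]=0$ for every $h\in\ker Dg(u)$. Fix such $h$ and define $G:(-\delta,\delta)^2\to\mathbb{R}$ by $G(t,s):=g(u+th+sv)$. Since $G$ is $C^1$, $G(0,0)=0$, and $\partial_s G(0,0)=Dg(u)[v]=1$, the scalar implicit function theorem produces $\varepsilon>0$ and a $C^1$ function $\phi:(-\varepsilon,\varepsilon)\to\mathbb{R}$ with $\phi(0)=0$ and $G(t,\phi(t))\equiv 0$. Differentiating this identity at $t=0$ yields $\phi'(0)=-Dg(u)[h]=0$. The curve $t\mapsto u+th+\phi(t)v$ thus lies in $U\cap A_g$ for small $|t|$, so $t\mapsto f(u+th+\phi(t)v)$ attains a local extremum at $t=0$; differentiating and using $\phi'(0)=0$ gives $Df(u)[h]=0$. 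Since $Dg(u)$ is a nonzero continuous linear functional, $X=\ker Dg(u)\oplus\mathbb{R}v$, so any continuous linear functional annihilating $\ker Dg(u)$ is a scalar multiple of $Dg(u)$; writing $Df(u)=-\lambda Dg(u)$ for the appropriate $\lambda\in\mathbb{R}$ finishes the proof.

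The main obstacle I anticipate is the proper invocation of the implicit function theorem, specifically verifying that $G$ is genuinely $C^1$ in a neighborhood of the origin (this follows from the chain rule and $g\in C^1(U;\mathbb{R})$) and extracting $\phi'(0)$ correctly. Everything else is algebra with continuous linear functionals; no essential analytic difficulty arises because the constraint consists of a single scalar equation, which is precisely why this ``special case'' admits a shorter proof than the general Banach-valued-constraint version in \cite{Zeidler_1995}.
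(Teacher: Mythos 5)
Your argument is correct. Note, however, that the paper does not prove Theorem \ref{theorem:B.1} at all: it is imported verbatim as a special case of \cite[Proposition 1 in Section 4.14]{Zeidler_1995}, so there is no in-paper proof to compare against. Your proposal supplies the standard self-contained proof of the codimension-one Lagrange multiplier rule, and every step checks out: $G(t,s)=g(u+th+sv)$ is $C^1$ because it is the composition of the $C^1$ map $g$ with a continuous affine map, so the scalar implicit function theorem applies; $\phi'(0)=-\partial_tG(0,0)/\partial_sG(0,0)=-Dg(u)[h]=0$ for $h\in\ker Dg(u)$; the curve $t\mapsto u+th+\phi(t)v$ stays in $U\cap A_g$ by openness of $U$ and continuity of $\phi$, so the (global, hence local) extremality of $u$ forces $Df(u)[h]=0$; and the final step, that a continuous linear functional annihilating the kernel of the nonzero functional $Dg(u)$ must be a scalar multiple of it, follows from the decomposition $w=(w-Dg(u)[w]\,v)+Dg(u)[w]\,v$, giving $\lambda=-Df(u)[v]$. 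What your route buys is precisely what you say: because the constraint is a single scalar equation, one avoids the surjectivity/closed-range machinery needed for the general Banach-space-valued constraint in \cite{Zeidler_1995}, at the cost of the argument not generalizing beyond codimension one. The only cosmetic point worth flagging is that $A_g$ as written in the statement should be read as $\{u\in U\mid g(u)=0\}$, since $g$ is only defined on $U$; this does not affect your proof.
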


For a given curve $\gamma \in W^{1,\infty}(I;\mathbb{R}^2)$ of positive constant speed,
i.e., $|\partial_x \gamma|\equiv\mathcal{L}(\gamma)>0$ a.e.\ in $I$,
we define $F(u):=\mathcal{L}(\gamma+u)$ and $G(u):=\mathcal{A}(\gamma+u)-1$ for $u\in W^{1,\infty}(I;\mathbb{R}^2)$.
Recall the well-known calculations that for any $u,\varphi\in W^{1,\infty}(I;\mathbb{R}^2)$,
\begin{align}\label{eq:B.1}
DG(u)(\varphi) &=-\frac{1}{2}\int_I [R\partial_x(\gamma+u)\cdot\varphi + R^{-1}(\gamma+u)\cdot\partial_x\varphi] dx,
\end{align}
and, provided that $\|\partial_x u\|_\infty<\mathcal{L}(\gamma)$,
\begin{align}\label{eq:B.2}
DF(u)(\varphi)=\int_I\frac{\partial_x(\gamma+u)}{|\partial_x(\gamma+u)|}\cdot\partial_x\varphi dx,
\end{align}
where the integrand is well defined since $|\partial_x(\gamma+u)|\geq\mathcal{L}(\gamma)-\|u\|_\infty>0$.
Notice that $DF$ and $DG$ are continuous with respect to $u$.
In addition,
\begin{align}
DG(0)(\varphi) &= -\int_I R\partial_x\gamma\cdot\varphi dx + \left[R^{-1}\gamma\cdot\varphi\right]_0^1, \label{eq:B.3}\\
DF(0)(\varphi) &= \int_I\partial_x\gamma\cdot\partial_x\varphi dx. \label{eq:B.4}
\end{align}

The purpose of this section is to prove the smoothness and properties of minimizers of $\mathcal{L}$ in the set $X_\theta'= \{ \gamma \in X_\theta \mid \mathcal{A}(\gamma)=1 \}$,
where $X_\theta$ was defined by~\eqref{eq:3.1}.

\begin{lemma}\label{theorem:B.2}
Let $\bar{\gamma}$ be a minimizer of $\mathcal{L}$ in $X_\theta'$, where $\theta\in(0,2\pi]$.
Then $\bar{\gamma}$ is smooth on $\bar{J}$ and moreover of constant curvature.
\end{lemma}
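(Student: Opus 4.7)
The plan is to apply the Lagrange multiplier theorem (Theorem~\ref{theorem:B.1}) on the Banach space of boundary-preserving perturbations; this yields a linear second-order ODE for $\bar{\gamma}$, from which smoothness and constant curvature both follow by standard bootstrapping.

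First I would replace $\bar{\gamma}$ by its constant-speed reparameterization, which by Appendix~\ref{section:A} alters neither $\mathcal{L}$ nor $\mathcal{A}$, so the reparameterized curve still minimizes $\mathcal{L}$ in $X_\theta'$; the benefit is $|\partial_x\bar{\gamma}|\equiv\mathcal{L}(\bar{\gamma})>0$ a.e. Then I would apply Theorem~\ref{theorem:B.1} on the Banach space $X:=\{u\in W^{1,\infty}(I;\mathbb{R}^2)\mid u(0)=u(1)=0\}$ and the open set $U:=\{u\in X\mid \|\partial_x u\|_\infty<\mathcal{L}(\bar{\gamma})\}$, with $F(u):=\mathcal{L}(\bar{\gamma}+u)$ and $G(u):=\mathcal{A}(\bar{\gamma}+u)-1$. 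On $U$ the denominator in~\eqref{eq:B.2} stays positive, so the formulas~\eqref{eq:B.1}--\eqref{eq:B.2} give $F,G\in C^1(U;\mathbb{R})$. Moreover every perturbation in $X$ preserves the two endpoint conditions and the equal-norm condition defining $X_\theta$, hence $\bar{\gamma}$ is a constrained minimizer of $F$ on $\{G=0\}\cap U$.

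To rule out the degenerate alternative in Theorem~\ref{theorem:B.1}, notice that for $\varphi\in X$ the boundary term of~\eqref{eq:B.3} vanishes, so $DG(0)(\varphi)=-\int_I R\partial_x\bar{\gamma}\cdot\varphi\,dx$. If this vanished for all $\varphi\in X\supset C_c^\infty(I;\mathbb{R}^2)$, density would force $R\partial_x\bar{\gamma}=0$ a.e., contradicting $\mathcal{L}(\bar{\gamma})>0$. Hence Theorem~\ref{theorem:B.1} supplies $\lambda\in\mathbb{R}$ with $DF(0)+\lambda DG(0)=0$ on $X$, and testing against arbitrary $\varphi\in C_c^\infty(I;\mathbb{R}^2)$ via~\eqref{eq:B.3}--\eqref{eq:B.4} yields
\begin{equation*}
\int_I \partial_x\bar{\gamma}\cdot\partial_x\varphi\,dx \;=\; \lambda\int_I R\partial_x\bar{\gamma}\cdot\varphi\,dx,
\end{equation*}
that is, the distributional identity $\partial_x^2\bar{\gamma}=-\lambda R\partial_x\bar{\gamma}$.

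Since the right-hand side lies in $L^\infty$, this is a bootstrap engine: $\bar{\gamma}\in W^{2,\infty}$ implies $R\partial_x\bar{\gamma}\in W^{1,\infty}$, hence $\bar{\gamma}\in W^{3,\infty}$, and iterating gives $\bar{\gamma}\in C^\infty(\bar{I})$, equivalently on $\bar{J}$ after the linear rescaling to arclength. Finally, because $|\partial_x\bar{\gamma}|\equiv\mathcal{L}(\bar{\gamma})$, passage to arclength is just division by $\mathcal{L}(\bar{\gamma})$, giving $\partial_s^2\bar{\gamma}=-(\lambda/\mathcal{L}(\bar{\gamma}))\,R\partial_s\bar{\gamma}=-(\lambda/\mathcal{L}(\bar{\gamma}))\,\nu$, so $\kappa\equiv-\lambda/\mathcal{L}(\bar{\gamma})$ is constant. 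I expect the main delicate point to be the setup rather than the ODE analysis: confirming the $C^1$-regularity of $F$ on $U$ and the nondegeneracy of $DG(0)$ before invoking Theorem~\ref{theorem:B.1}; once the Euler--Lagrange identity is in hand, everything downstream is classical.
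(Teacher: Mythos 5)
Your proposal is correct and follows essentially the same route as the paper: constant-speed reparameterization, the Lagrange multiplier theorem on $W^{1,\infty}_0(I;\mathbb{R}^2)$ with the gradient constraint $\|\partial_x u\|_\infty<\mathcal{L}(\bar{\gamma})$, the resulting Euler--Lagrange identity $\partial_x^2\bar{\gamma}=-\lambda R\partial_x\bar{\gamma}$, and bootstrapping to smoothness and constant curvature. Your explicit verification that $DG(0)\neq0$ is a point the paper's proof only asserts, and is a welcome addition.
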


\begin{proof}
Without loss of generality, we may assume that $\bar{\gamma}$ is of constant speed, i.e., $|\partial_x\bar{\gamma}|\equiv\mathcal{L}(\bar{\gamma})>0$ a.e.\ in $I$.
We use Theorem \ref{theorem:B.1} for the spaces
\begin{equation*}
	X := W^{1,\infty}_0(I;\mathbb{R}^2), \quad U := \{\varphi\in X\mid \|\partial_x\varphi\|_\infty<\mathcal{L}(\bar{\gamma})\},
\end{equation*}
and the functionals
\begin{equation*}
	f(u) := \mathcal{L}(\bar{\gamma}+u), \quad g(u) := \mathcal{A}(\bar{\gamma}+u)-1.
\end{equation*}
Note that $Df$ and $Dg$ are continuous on $U$, cf.\ (\ref{eq:B.2}) and (\ref{eq:B.1}).
By the minimality of $\bar{\gamma}$, the zero function $u=0\in U$ is a minimizer of $f$ subject to $g=0$, and hence Theorem \ref{theorem:B.1} implies that, since $Dg(0)\neq0$,
there is $\lambda\in\mathbb{R}$ such that $Df(0)+\lambda Dg(0)=0$.
By \eqref{eq:B.3}, \eqref{eq:B.4}, and using the integration by parts, we obtain for any $\varphi\in X$,
\begin{align*}
0=Df(0)(\varphi)+\lambda Dg(0)(\varphi)=-\int_I(\partial_x^2\bar{\gamma}+\lambda R\partial_x\bar{\gamma})\cdot\varphi dx,
\end{align*}
where $\partial_x^2\bar{\gamma}$ is first understood in the distributional sense but then a bootstrap argument makes the classical sense; in particular, $\bar{\gamma}\in C^\infty(\bar{I};\mathbb{R}^2)$.
The assertion that $\bar{\gamma}$ is of constant curvature follows by the fundamental lemma of calculus of variations and the facts that $R\partial_x\bar{\gamma}=\mathcal{L}(\bar{\gamma})\nu$
and $\partial_x^2\bar{\gamma}=\mathcal{L}(\bar{\gamma})^2\kappa\nu$, which follow since $\bar{\gamma}$ is of constant speed.
\end{proof}

\begin{lemma}\label{theorem:B.3}
Suppose that $\theta\in(0,2\pi)$.
Let $\bar{\gamma}$ be a minimizer of $\mathcal{L}$ in $X_\theta'$ such that $|\bar{\gamma}(0)|=|\bar{\gamma}(1)|\neq0$.
Then $\partial_x\bar{\gamma}(0)\cdot v_0=\partial_x\bar{\gamma}(1)\cdot v_\theta$.
\end{lemma}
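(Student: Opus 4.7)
The plan is to enlarge the class of admissible perturbations used in Lemma~\ref{theorem:B.2} so that the endpoints of $\bar\gamma$ are allowed to slide along the rays $\Lambda_0$ and $\Lambda_\theta$ while maintaining $|\gamma(0)|=|\gamma(1)|$, and then to read off the stated identity as a natural boundary condition produced by the Lagrange multiplier method.

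First, I would write $\bar\gamma(0)=rv_0$ and $\bar\gamma(1)=rv_\theta$ with $r>0$, and identify the ``radial'' perturbations respecting the full set of boundary conditions. For any $a\in\mathbb{R}$, any $\varphi\in W^{1,\infty}(I;\mathbb{R}^2)$ with $\varphi(0)=av_0$ and $\varphi(1)=av_\theta$ yields a one-parameter family $\bar\gamma+t\varphi$ whose endpoints lie on $\Lambda_0$ and $\Lambda_\theta$ with common modulus $r+ta>0$ for small $|t|$. Thus such $\varphi$ are tangent to $X_\theta$ at $\bar\gamma$, so I would apply Theorem~\ref{theorem:B.1} on the closed subspace
\begin{equation*}
X:=\bigl\{\varphi\in W^{1,\infty}(I;\mathbb{R}^2)\;:\;\varphi(0)\in\mathbb{R}v_0,\ \varphi(1)\in\mathbb{R}v_\theta,\ v_0\cdot\varphi(0)=v_\theta\cdot\varphi(1)\bigr\}
\end{equation*}
together with $f(u):=\mathcal{L}(\bar\gamma+u)$ and $g(u):=\mathcal{A}(\bar\gamma+u)-1$, producing a multiplier $\lambda\in\mathbb{R}$ such that $Df(0)(\varphi)+\lambda Dg(0)(\varphi)=0$ for every $\varphi\in X$.

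Next I would expand via \eqref{eq:B.3}--\eqref{eq:B.4}, integrate the length contribution by parts, and isolate the boundary terms. The interior integrand collapses to $-(\partial_x^2\bar\gamma+\lambda R\partial_x\bar\gamma)\cdot\varphi$, which vanishes by the Euler--Lagrange equation of Lemma~\ref{theorem:B.2}, after one short check that the multiplier $\lambda$ here and the one there can be identified by testing against compactly supported perturbations. What survives is schematically
\begin{equation*}
0=\bigl[\partial_x\bar\gamma\cdot\varphi\bigr]_0^1+\lambda\bigl[R^{-1}\bar\gamma\cdot\varphi\bigr]_0^1.
\end{equation*}
The key geometric input is then that $R^{-1}$ is the clockwise quarter turn, so $R^{-1}v_0\perp v_0$ and $R^{-1}v_\theta\perp v_\theta$; combined with $\bar\gamma(0)=rv_0$, $\bar\gamma(1)=rv_\theta$ and $\varphi(0)=av_0$, $\varphi(1)=av_\theta$, the second bracket vanishes identically, leaving $a\bigl(\partial_x\bar\gamma(1)\cdot v_\theta-\partial_x\bar\gamma(0)\cdot v_0\bigr)=0$, and choosing $a\neq 0$ finishes the proof.

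The main hurdle I expect is bookkeeping: checking the nondegeneracy $Dg(0)\not\equiv 0$ on $X$ (so Theorem~\ref{theorem:B.1} actually supplies a multiplier rather than degenerating), and reconciling the multiplier obtained here with the one from Lemma~\ref{theorem:B.2} up to the normalization forced by the constant-speed assumption on $\bar\gamma$. The conceptual content is minimal: the equal-distance constraint makes admissible boundary variations purely radial at each endpoint, and radial vectors are precisely those orthogonal to $R^{-1}\bar\gamma$ there; this orthogonality is exactly what kills the area-derived boundary term and isolates the length-derived perpendicularity condition.
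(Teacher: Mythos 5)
Your proposal is correct and follows essentially the same route as the paper's own proof: both enlarge the admissible perturbation class to those $\varphi$ with $\varphi(0)=\alpha v_0$, $\varphi(1)=\alpha v_\theta$, apply the multiplier theorem, kill the interior term via the Euler--Lagrange equation from Lemma~\ref{theorem:B.2}, and observe that the area boundary term $\bigl[R^{-1}\bar{\gamma}\cdot\varphi\bigr]_0^1$ vanishes by the orthogonality of radial directions to their quarter-turns, leaving exactly $\partial_x\bar{\gamma}(0)\cdot v_0=\partial_x\bar{\gamma}(1)\cdot v_\theta$. The bookkeeping points you flag (nondegeneracy of $Dg(0)$ and identification of the multiplier) are handled in the paper by the same device you suggest, namely testing against compactly supported perturbations.
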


\begin{proof}
We may again assume that $\bar{\gamma}$ is of constant speed.
We also use Theorem~\ref{theorem:B.1} as in the above proof, just replacing $X$ and $U$ by
\begin{align*}
\widetilde{X} &:=\{\varphi\in W^{1,\infty}_0(I;\mathbb{R}^2) \mid \exists\alpha\in\mathbb{R},\ \varphi(0)=\alpha v_0,\ \varphi(1)=\alpha v_\theta \},\\
\widetilde{U} &:=\{\varphi\in X\mid \|\partial_x\varphi\|_\infty<\mathcal{L}(\bar{\gamma}),\ |\varphi(0)|<|\bar{\gamma}(0)| \}.
\end{align*}
Calculating the first variation, we find that there is $\tilde{\lambda}\in\mathbb{R}$ such that for any $\varphi\in\widetilde{X}$,
\begin{align*}
0 &=-\int_I(\partial_x^2\bar{\gamma}+\tilde{\lambda}R\partial_x\bar{\gamma})\cdot\varphi dx + \tilde{\lambda}\left[R^{-1}\bar{\gamma}\cdot\varphi\right]_0^1 + \left[\partial_x\bar{\gamma}\cdot\varphi\right]_0^1\\
   &=: T_1 + T_2 + T_3.
\end{align*}
By restricting perturbations $\varphi$ so that $\varphi(0)=\varphi(1)=0$,
the same argument as in Lemma \ref{theorem:B.2} implies that $\partial_x^2\bar{\gamma}+\tilde{\lambda}R\partial_x\bar{\gamma}=0$ a.e.\ in $I$, and hence $T_1=0$ holds even for any $\varphi\in\widetilde{X}$.
In addition, $T_2=0$ also holds for any $\varphi\in\widetilde{X}$ thanks to the boundary conditions of $\bar{\gamma}\in X_\theta'$ and $\varphi\in\widetilde{X}$.
Therefore, we deduce that $T_3=0$ needs to hold for any $\varphi\in\widetilde{X}$, which directly implies the assertion.
\end{proof}

\section{Rotational symmetry preserving property}

For the reader's convenience, we give a proof of the rotational symmetry preserving property based on uniqueness and geometric invariance,
although this kind of argument would be standard in geometric flows; see e.g.\ the remark just after Theorem 1.1 in \cite{EMS_1998}.

Throughout this section, we let $\gamma_0$ be a smooth initial curve in the same sense as Section \ref{section:4} so that \eqref{eq:CDF} is well posed,
and $\gamma:\mathbb{S}^1\times[0,T)\to\mathbb{R}^2$ denote a unique solution to \eqref{eq:CDF}.
Recall that it is enough for our purpose if $\gamma_0$ is of class $C^{2,\alpha}$, thanks to \cite[Theorem 1.1]{EMS_1998}.

\begin{lemma}[Invariance under rotation and change of variables]\label{theorem:C.1}
Let $S$ be a rotation matrix in $\mathbb{R}^2$, and $\xi:\mathbb{S}^1\to\mathbb{S}^1$ be an orientation-preserving diffeomorphism.
Then $S\circ\gamma\circ(\xi\times{\rm Id})$ is a unique solution to \eqref{eq:CDF} starting from $S\circ\gamma_0\circ\xi$.
\end{lemma}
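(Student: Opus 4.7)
The plan is to show directly that $\tilde{\gamma}(x,t):=S\gamma(\xi(x),t)$ satisfies \eqref{eq:CDF} pointwise, and then to invoke the uniqueness part of the local well-posedness. Since the right-hand side $-(\partial_s^2\kappa)\nu$ of \eqref{eq:CDF} is a purely geometric quantity, the proof amounts to tracking how each ingredient transforms under the composition with the orientation-preserving diffeomorphism $\xi$ (which is a change of parameter) and the rotation $S$ (which is an ambient isometry).

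First I would unwind how the arclength derivative transforms. Writing $T_\gamma(y,t):=\partial_s\gamma(y,t)=\partial_y\gamma(y,t)/|\partial_y\gamma(y,t)|$ and using that $\xi'>0$ because $\xi$ is orientation-preserving, a direct computation gives $|\partial_x\tilde{\gamma}(x,t)|=\xi'(x)|\partial_y\gamma(\xi(x),t)|$, and hence
\begin{equation*}
\partial_s\tilde{\gamma}(x,t)=\frac{1}{|\partial_x\tilde{\gamma}|}\partial_x\tilde{\gamma}=S\,T_\gamma(\xi(x),t).
\end{equation*}
Since $R$ commutes with any planar rotation $S$, one obtains $\nu_{\tilde{\gamma}}(x,t)=R\partial_s\tilde{\gamma}(x,t)=S\nu_\gamma(\xi(x),t)$. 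Iterating the same chain-rule computation, the $\xi'$ factors cancel and I get $\partial_s^2\tilde{\gamma}(x,t)=S(\partial_s^2\gamma)(\xi(x),t)$. Since $S$ preserves the Euclidean inner product, this yields the key invariance
\begin{equation*}
\kappa_{\tilde{\gamma}}(x,t)=\partial_s^2\tilde{\gamma}\cdot\nu_{\tilde{\gamma}}\,(x,t)=\kappa_\gamma(\xi(x),t).
\end{equation*}

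Next I would differentiate this identity in $s$ twice, again using $\partial_s=|\partial_x\tilde{\gamma}|^{-1}\partial_x$ on the left and $\partial_s=|\partial_y\gamma|^{-1}\partial_y$ on the right; the $\xi'$ factors produced by the chain rule cancel the ones in the reparameterized arclength element, leaving $\partial_s^j\kappa_{\tilde{\gamma}}(x,t)=(\partial_s^j\kappa_\gamma)(\xi(x),t)$ for $j=1,2$. Combining with the normal transformation law $\nu_{\tilde{\gamma}}(x,t)=S\nu_\gamma(\xi(x),t)$,
\begin{equation*}
-(\partial_s^2\kappa_{\tilde{\gamma}})\nu_{\tilde{\gamma}}(x,t)
=-S(\partial_s^2\kappa_\gamma)(\xi(x),t)\,\nu_\gamma(\xi(x),t)
=S\,\partial_t\gamma(\xi(x),t)=\partial_t\tilde{\gamma}(x,t),
\end{equation*}
where the last equality uses that $\gamma$ solves \eqref{eq:CDF} and that $\xi$ is time-independent. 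Thus $\tilde{\gamma}$ solves \eqref{eq:CDF}, and the initial trace reads $\tilde{\gamma}(\cdot,0)=S\gamma_0\circ\xi$.

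For the ``unique'' part, note that $S\gamma_0\circ\xi$ belongs to the same regularity class as $\gamma_0$ and is immersed since $S$ is an isometry and $\xi$ a diffeomorphism; by the well-posedness assumed throughout Section~\ref{section:4} (e.g.\ \cite[Theorem~1.1]{EMS_1998}), the solution emanating from $S\gamma_0\circ\xi$ is unique, and $\tilde{\gamma}$ must coincide with it. The main obstacle is essentially bookkeeping: one has to be careful to verify that the powers of $\xi'$ arising from the chain rule cancel exactly, which is precisely what makes CDF parameterization-invariant; this cancellation fails if $\xi$ reverses orientation, explaining the standing hypothesis.
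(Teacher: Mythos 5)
Your proof is correct and follows essentially the same route as the paper: verify the initial trace, check via the chain rule that the geometric quantities transform as $\nu^*(x,t)=S\nu(\xi(x),t)$ and $\partial_{s^*}^2\kappa^*(x,t)=(\partial_s^2\kappa)(\xi(x),t)$, and conclude by uniqueness --- you merely spell out the bookkeeping that the paper dismisses as ``standard geometric properties.'' One small caveat: your closing claim that the cancellation fails for orientation-reversing $\xi$ is not right, since then both the curvature and the unit normal flip sign so that $(\partial_{s^*}^2\kappa^*)\nu^*$ still equals $S[(\partial_s^2\kappa)\nu]\circ\xi$, which is exactly why the paper's remark after this lemma extends the statement to arbitrary isometries $S$ and arbitrary diffeomorphisms $\xi$.
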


\begin{proof}
Since the initial condition $S\circ\gamma\circ(\xi\times{\rm Id})(\cdot,0)=S\circ\gamma_0\circ\xi$ holds, by uniqueness,
it suffices to show that $S\circ\gamma\circ(\xi\times{\rm Id})$ satisfies \eqref{eq:CDF} on $\mathbb{S}^1\times(0,T)$.
Using the linearity of $S$ and \eqref{eq:CDF} for $\gamma$, we have
\begin{align*}
\partial_t[S\circ\gamma\circ(\xi\times{\rm Id})] &=S\circ\partial_t\gamma\circ(\xi\times{\rm Id})\\
 &= S\circ[(\partial_s^2\kappa)\nu]\circ(\xi\times{\rm Id}).
\end{align*}
Therefore, what we need to show is that for any fixed $t\in(0,T)$, the expression $S\circ[(\partial_s^2\kappa)\nu]\circ(\xi\times{\rm Id})$ is nothing but the right-hand side of (CDF)
for the curve $S\circ\gamma\circ(\xi\times{\rm Id})$; namely, letting $\gamma^*:=S\circ\gamma\circ(\xi\times{\rm Id})$, we show that
\begin{align*}
[(\partial_{s^*}^2\kappa^*)\nu^*](x,t)=S[(\partial_s^2\kappa^2)\nu](\xi(x),t),
\end{align*}
where $\partial_s$ (resp.\ $\partial_{s^*}$) denotes the arclength derivative, $\nu$ (resp.\ $\nu^*$) the unit normal,
and $\kappa$ (resp.\ $\kappa^*$) the curvature of the time-slice curve $\gamma(\cdot,t)$ (resp.\ $\gamma^*(\cdot,t)$).
This easily follows from the facts that $\nu^*(x,t)=S\nu(\xi(x),t)$ and that $\partial_{s^*}^2\kappa^*(x,t)=\partial_s^2\kappa(\xi(x),t)$,
which are just consequences of the definition $\gamma^*(x,t)=S\gamma(\xi(x),t)$ and standard geometric properties (e.g.\ invariance under change of variables).
\end{proof}

\begin{remark}
The same property holds for more general $S$ and $\xi$, namely, for any isometry $S$ and any diffeomorphism $\xi$, which may not preserve orientation.
\end{remark}

\begin{lemma}[Rotational symmetry preserving]\label{theorem:C.3}
If $\gamma_0$ is $(m,i)$-th rotationally symmetric, then so is $\gamma(\cdot,t)$ for all $t\in(0,T)$.
\end{lemma}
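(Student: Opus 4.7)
The plan is to exploit uniqueness together with the two-sided invariance of the flow (Lemma \ref{theorem:C.1}) by constructing two different solutions that start from the same initial datum. Concretely, set
\[
\gamma^{(1)}(x,t):=\gamma(x+\tfrac{1}{m},\,t),\qquad \gamma^{(2)}(x,t):=R_{2\pi i/m}\gamma(x,t),
\]
and show that both solve \eqref{eq:CDF} on $\mathbb{S}^1\times[0,T)$ with the same initial condition.

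First, I would apply Lemma \ref{theorem:C.1} with the rotation $S=\mathrm{Id}$ and the orientation-preserving diffeomorphism $\xi(x)=x+\frac{1}{m}$ of $\mathbb{S}^1=\mathbb{R}/\mathbb{Z}$ to conclude that $\gamma^{(1)}$ is the unique solution to \eqref{eq:CDF} starting from $\gamma_0(\,\cdot+\tfrac{1}{m})$. Next, I would apply Lemma \ref{theorem:C.1} again with $S=R_{2\pi i/m}$ and $\xi=\mathrm{Id}$ to conclude that $\gamma^{(2)}$ is the unique solution to \eqref{eq:CDF} starting from $R_{2\pi i/m}\gamma_0$. The $(m,i)$-th rotational symmetry of $\gamma_0$ (Definition \ref{theorem:2.1}) gives exactly
\[
\gamma_0(x+\tfrac{1}{m})=R_{2\pi i/m}\gamma_0(x)\quad\text{for all }x\in\mathbb{S}^1,
\]
so the two initial data coincide. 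By the uniqueness clause in the well-posedness of \eqref{eq:CDF}, we deduce $\gamma^{(1)}\equiv\gamma^{(2)}$ on $\mathbb{S}^1\times[0,T)$, which is precisely the $(m,i)$-th rotational symmetry of $\gamma(\cdot,t)$ at every time.

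There is essentially no analytic obstacle here, since Lemma \ref{theorem:C.1} packages all the calculation (invariance of $\kappa$, $\nu$, and $\partial_s^2\kappa$ under rigid motions and reparametrisations) and uniqueness is part of the standing well-posedness hypothesis on $\gamma_0$. The only small point of care is the bookkeeping of orientations: the shift $\xi(x)=x+\frac{1}{m}$ on $\mathbb{S}^1$ is orientation-preserving and smooth, and $R_{2\pi i/m}$ is a proper rotation, so Lemma \ref{theorem:C.1} applies as stated without needing the extended version mentioned in the remark after it.
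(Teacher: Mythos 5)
Your proposal is correct and is essentially the paper's own argument: the paper likewise applies Lemma \ref{theorem:C.1} to both sides of the identity $\gamma_0\circ\xi=R_{2\pi i/m}\circ\gamma_0$ with $\xi(x)=x+1/m$ and concludes $\gamma\circ(\xi\times{\rm Id})=R_{2\pi i/m}\circ\gamma$ by uniqueness. Your version just makes the two separate applications of the invariance lemma (one with $S={\rm Id}$, one with $\xi={\rm Id}$) explicit.
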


\begin{proof}
By assumption, letting $S:=R_{2\pi i/m}$ and $\xi(x):=x+1/m$, we have
\begin{align*}
\gamma_0\circ\xi=S\circ\gamma_0.
\end{align*}
Applying Lemma \ref{theorem:C.3} to the both sides, we find that the solution to \eqref{eq:CDF} starting from the left-hand side is $\gamma\circ(\xi\times{\rm Id})$,
while the right-hand side $S\circ\gamma$.
By uniqueness we have $\gamma\circ(\xi\times{\rm Id})=S\circ\gamma$, thus completing the proof.
\end{proof}

\end{appendix}

\end{document}